\def\namedlabel#1#2{\begingroup
 #2%
 \def\@currentlabel{#2}%
 \phantomsection\label{#1}\endgroup
}
\theoremstyle{plain}
\newtheorem*{theorem*}{Theorem}
\newtheorem*{thmex*}{Theorem~\ref{example}}
\newtheorem*{thmasymp*}{Theorem~\ref{thmAsymp}}
\newtheorem{theorem}{Theorem}[section]
\newtheorem{corollary}[theorem]{Corollary}
\newtheorem{lemma}[theorem]{Lemma}
\newtheorem{example}[theorem]{Example}
\theoremstyle{definition}
\newtheorem{definition}[theorem]{Definition}
\newcommand{\ben}{\begin{enumerate}}
\newcommand{\een}{\end{enumerate}}
\newcommand{\ed}{\end{document}}
\definecolor{rrr}{rgb}{.9,0,.1}
\definecolor{rr}{rgb}{.8,0,.3}
\newcommand{\tr}{\triangleright}
\title[Knot theory for proteins]{Knot theory for proteins: Gauss codes, quandles and bondles}
\author[C. Adams, J. Devadoss]{Colin Adams, Judah Devadoss}
\address{Williams College, Williamstown, Massachusetts, USA}
\email{cadams@williams.edu,judahdev@gmail.edu}
\author[M. elhamadi]{Mohamed Elhamdadi}
\address{University of South Florida, Tampa, Florida, USA}
\email{emohamed@mail.usf.edu}
\author[A. Mashaghi]{Alireza Mashaghi}
\address{Leiden University, Leiden, The Netherlands}
\email{a.mashaghi.tabari@lacdr.leidenuniv.nl}
\begin{document}

\begin{abstract}

Proteins are linear molecular chains that often fold to function. The topology of folding is widely believed to define its properties and function, and knot theory has been applied to study protein structure and its implications. More that 97\% of proteins are, however, classified as unknots when intra-chain interactions are ignored. This raises the question as to whether knot theory can be extended to include intra-chain interactions and thus be able to categorize topology of the proteins that are otherwise classified as unknotted. Here, we develop knot theory for folded linear molecular chains and apply it to proteins. 

For this purpose, proteins will be thought of as an embedding of a linear segment into three dimensions, with additional structure coming from self-bonding. We then project to a two-dimensional diagram and consider the basic rules of equivalence between two diagrams. We further consider the representation of projections of proteins using Gauss codes, or strings of numbers and letters, and how we can equate these codes with changes allowed in the diagrams. Finally, we explore the possibility of applying the algebraic structure of quandles to distinguish the topologies of proteins. Because of the presence of bonds, we extend the theory to define bondles, a type of quandle particularly adapted to distinguishing the topological types of proteins.
\end{abstract}

\maketitle

\section{Introduction}
Folded linear molecular chains are ubiquitous in biology. Proteins and nucleic acids are linear polymers responsible for most cellular functions, for the inheritance of biological information, and are subject to changes during evolution and pathologies \cite{Dobson2002, Corces2018}. These chains often fold to function and their 3D structure contains information about their dynamics, evolution, and inter-molecular interactions and can be used for designing drugs \cite{Kantidze2019, Amaral2017}. Geometric and chemical properties of folded proteins and genomic DNA have been widely studied using various methods including NMR spectroscopy, X-ray crystallography, chromosome conformation capture, and mass spectrometry among others. Topological properties of these molecules have remained relatively unexplored due to lack of a relevant conceptual framework. Knot theory was successfully applied to study proteins and nucleic acids and protein and DNA knots were studied using various experimental techniques including nanopore technology and probe microscopy \cite{Taylor2003, Sulkowska2008, Soler2013, Mallam2010}.  Despite being interesting and innovative, these studies have had a limited impact on protein science as the vast majority of identified proteins fall into one topology class, i.e., the unknot \cite{Sulkowska2012}. 

Thus, standard knot theory cannot be effectively used to classify proteins \cite{mashaghi2014circuit}. Another shortcoming of the standard knot theoretic approach is that intra-molecular interactions or contacts are ignored. These interactions drive the folding of the molecular chains and are functionally important \cite{mashaghi2015circuit, C4CP03402C, heidari2017topology, satarifard2017topology, mashaghi2019endrestrained}. When intra-chain interaction is taken into consideration, the prevalence of knots and links substantially increases \cite{7, Dabrowski17, Dabrowski19}.

Thus, there is a need for a new topology framework that includes intra-chain interactions and is able to classify fold topology of biomolecular chains and in particular the proteins.  This paper presents a new knot theory for folded linear molecular chains and looks to classify the topological structure of proteins through the application of  certain aspects of knot theory. More specifically, we will apply a modified singular knot theory, Gauss codes to keep track of the structure, and an associated singular quandle called a bondle to distinguish structures.

Proteins are continuous linear molecules with the ends unbonded, so we will look at them as linear segments embedded in three-space. Protein structure for a single protein is formed on three different levels, shown in Figure \ref{fig:11}. The Primary Structure is defined by the ordering of the amino acids, or building blocks of the protein. These amino acids are bonded in a sequential chain from the N-terminus to the C-terminus, with each amino acid presenting an exposed R-group that can interact chemically or through the electrostatic effect with other R-groups and other molecules. 
The Secondary Structure is defined by the coiling or local structuring of the amino acids. This is where structural patterns appear such as $\beta$-pleated sheets and $\alpha$-helices, both held together by hydrogen bonds. 
Tertiary Structure is defined by the interactions between different R-groups or backbone interactions, forming a structure for the entire protein.

\begin{figure}[htbp]
  \includegraphics[width=.95\linewidth]{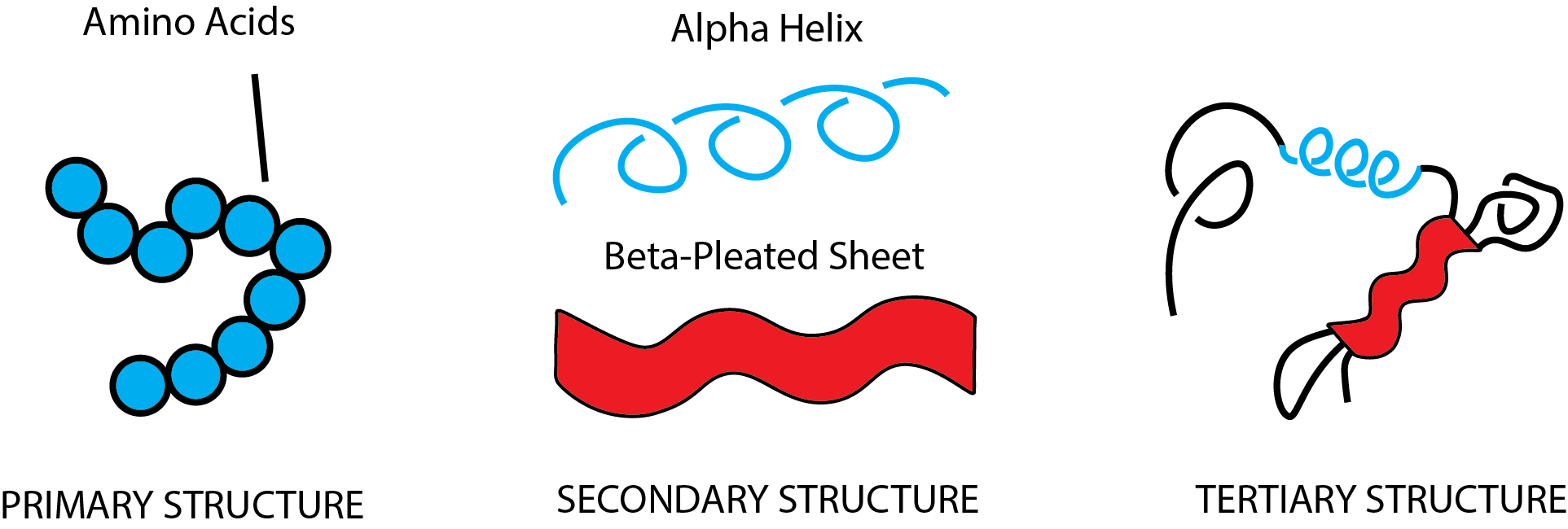}
  \caption{Depiction of protein structures.}
  \label{fig:11}
\end{figure}

A \textit{knot} is an embedding of S$^1$, the circle, into three dimensions, called a {\it conformation}, given by a function $f:[0 , 2\pi] \rightarrow \mathds{R}^3$ where $f(0)=f(2\pi)$. We define an equivalence on conformations of knots, considering two conformations equivalent if there is an ambient isotopy from one to the other. This means that we can deform the one through space to the other without passing the knot through itself. 
As is common, we will use the word ``knot'' to represent both a given conformation of a knot and the equivalence class of conformations corresponding to the given conformation, clarifying which is which when necessary.

Knot theory can be extended to singular knot theory, where we allow a finite number of singular points where two points on the circle are sent to the same point in 3-space by $f$. (See \cite{Dan18} for more on singular knots.) Allowing singularities will provide us with the ability to model intra-molecular bonds.

While it is often preferable to describe a knot in three dimensions, it is not always tractable. Therefore, we project the knot in a particular direction onto a plane, obtaining a   \textit{projection}. We consider only regular projections where a finite number of pairs of points on the knot are identified with each other and result in what we call crossings. We keep track of which of the two points in the pair is the top one. A projection is essentially a shadow that retains information at the crossings of the strands. Using a defined set of rules called Reidemeister moves, we can change one projection of a knot to any other projection of that same knot. In a two-dimensional projection, we define a \textit{classical crossing} as a place where one strand goes over another, and if we are allowing singularities, a \textit{singular crossing}  where two or more strands intersect each other at a point in the three-dimensional conformation.

We can similarly think of proteins as a conformation of a line segment [0,1] in three dimensions, which we will call a {\it protein model}, with an associated function $f:[0 , 1] \rightarrow \mathds{R}^3$. If we utilize the same equivalence of ambient isotopy, then every protein can be disentangled and they are all equivalent. But once we include singularities reflecting bonding of points along the conformation, this will no longer be true.

In a protein, we define singular crossings to exist where a protein has intra-chain interactions (also called contacts). These contacts take one of two forms. The first is covalent bonds, defined as two atoms sharing electrons. Of special interest to proteins are disulfide bridges, in which two thiol R-groups, made of one sulfur and one hydrogen, bond and release their hydrogen atoms. The second type of bond is formed through non-covalent interactions. A major form of non-covalent interactions are the electrostatic ones, particularly in the form of hydrogen bonds, which are partially electrostatic, but often come in multiples, making their strength significant enough to control the protein's structure. Hydrogen bonds mediate the interactions between beta strands and the formation of alpha-helical structures. 


Proteins differ from knots in that they have  two endpoints that are not connected. One approach to this difference has been work on modelling proteins as \textit{knotoids}, which are the projection of the conformations of a linear segment \cite{7, GGLDSK}. Reidemeister moves (which we will look at in the next section) apply in the theory of knotoids, but in this theory, the endpoints cannot be moved across other strands in the projection. Although this may be useful when dealing with proteins that are somewhat rigid and have so-called lassos, here we desire a general theory that allows the movement of endpoints across strands in a projection.  Thus we do not consider proteins as they relate to knotoids. 

In finding a notation for protein structure, it is true that proteins have variable flexibility and restricted length, providing more limitations than the ones we place on curves in 3-space when defining knots. But the goal of this paper is topological in nature. Thus, we do not capture the full sense of rigidity or steric hindrance in a protein. Since we are allowing for the deformation of a protein's strands in the following sections, we will at times allow the same for the ends of the strand. The notation defined in this paper looks to balance simplicity and mathematical utility with chemical precision. 

In Section 2, we discuss Reidemeister moves, which are moves one can do on a projection of a knot to obtain a new diagram of the same knot. We extend them to allow features present in proteins, including bonds, endpoint $\beta$-pleated sheets and $\alpha$-helices.

In Section 3, we introduce Gauss codes, which can be used to describe in symbols a projection of a knot. We extend them to proteins.

In Section 4, we introduce quandles, which are algebraic objects that can be used to distinguish between different knots. In Section 5, we introduce singquandles, which are an extension of quandles that have been used to distinguish knots with singularities. We further extend this idea to the idea of a bondle, which is a quandle that can be applied in the presence of  bonds.

In Section 6, we introduce the oriented bondle, which seems particularly suited to distinguishing between the topological types of proteins. We then identify several families of oriented bondles. In Section 7, we provide several examples of pairs of proteins that can be distinguished using oriented bondles.

\section{Reidemeister Moves}

\textit{Reidemeister moves} are a set of changes to the combinatorial pattern that is a projection of a knot. Planar isotopy is a deformation of the projection that does not change the combinatorial pattern.  The critical result from \cite{AB26} or \cite{Rei27}  says that two knots $K_1$ and $K_2$ are equivalent if and only if there exists a sequence of Reidemeister moves and planar isotopy in the plane that transforms a projection of $K_1$ to a projection of $K_2$. The three Reidemeister moves are depicted in  Figure \ref{fig:21}. We refer to the monogonal face found in a Type I Reidemeister move as a {\it kink}. 

\begin{figure}[htbp]
  \includegraphics[width=.95\linewidth]{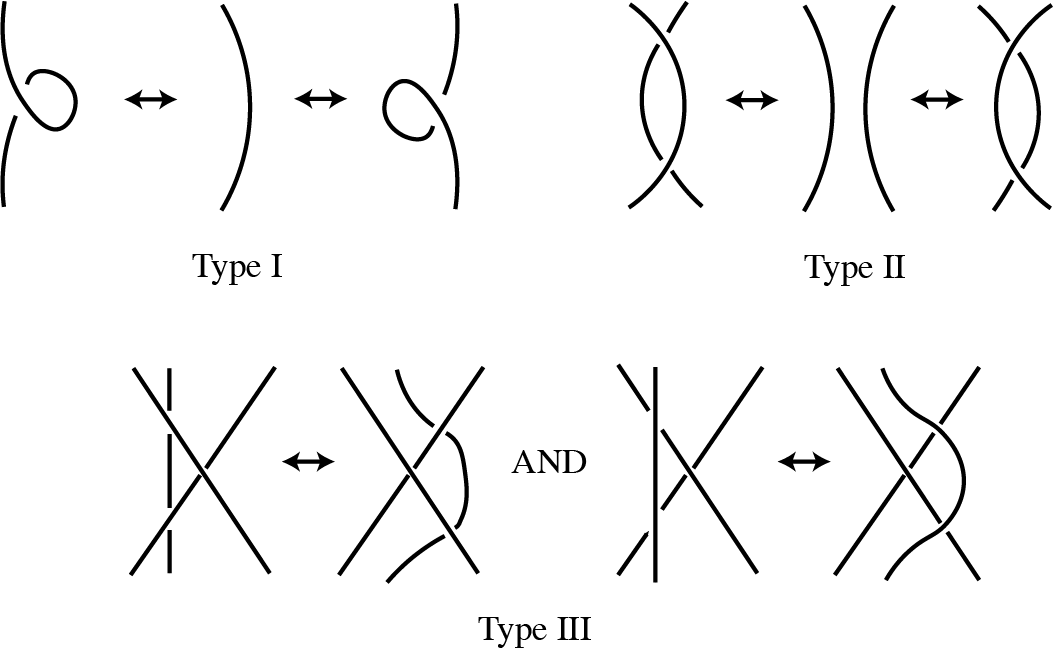}
  \caption{Types I, II, and III Reidemeister Moves}
  \label{fig:21}
\end{figure}

While these three Reidemeister moves are able to completely describe equivalence in classical knots, for our purposes we need to add in  singular crossings to represent the covalent and hydrogen bonds that proteins form with themselves (\cite{Dan18}). The use of singular crossings requires an additional two Reidemeister moves as shown in Figure \ref{fig:22} (\cite{Yua17, AE}).

\begin{figure}[htbp]
  \includegraphics[width=.95\linewidth]{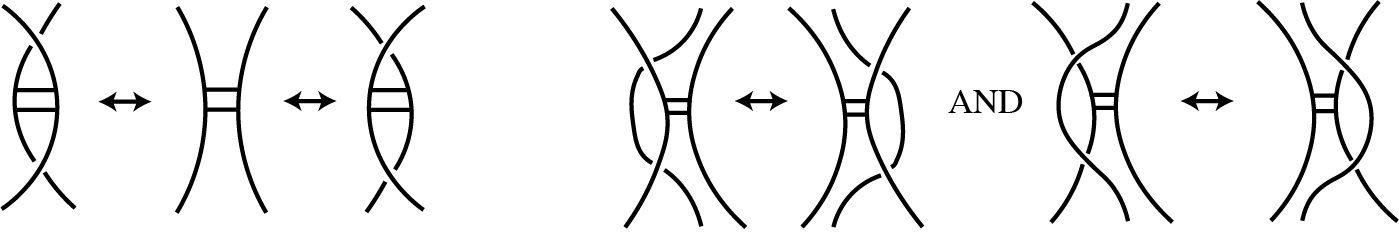}
  \caption{Type IV and V Reidemeister Moves for Singularities.}
  \label{fig:22}
\end{figure}

As appearing in this illustration, we denote a singularity by a small rectangle with two parallel edges on two strands. By drawing singular crossings in this fashion, we show that the strands do not cross over each other, but are instead bound together to more closely replicate the structure we see in proteins. By doing so, we do not lose any of the structure or properties we would hope to retain. 

In a protein, a $\beta$-pleated sheet consists of multiple segments of a protein that run parallel to each other, roughly in a plane, with hydrogen bonds connecting each segment to its adjacent segment in multiple places. If we collapse the sheet to a point, this functionally looks like a singular crossing with more than two strands. 
Therefore, in order to represent $\beta$-pleated sheets, we must extend singular knot theory to contain multi-singularities. We define  a \textit{multi-singularity} as a place where two or more strands intersect each other at a single point, as shown in Figure \ref{fig:23}. 

\begin{figure}[htbp]
  \includegraphics[width=.6\linewidth]{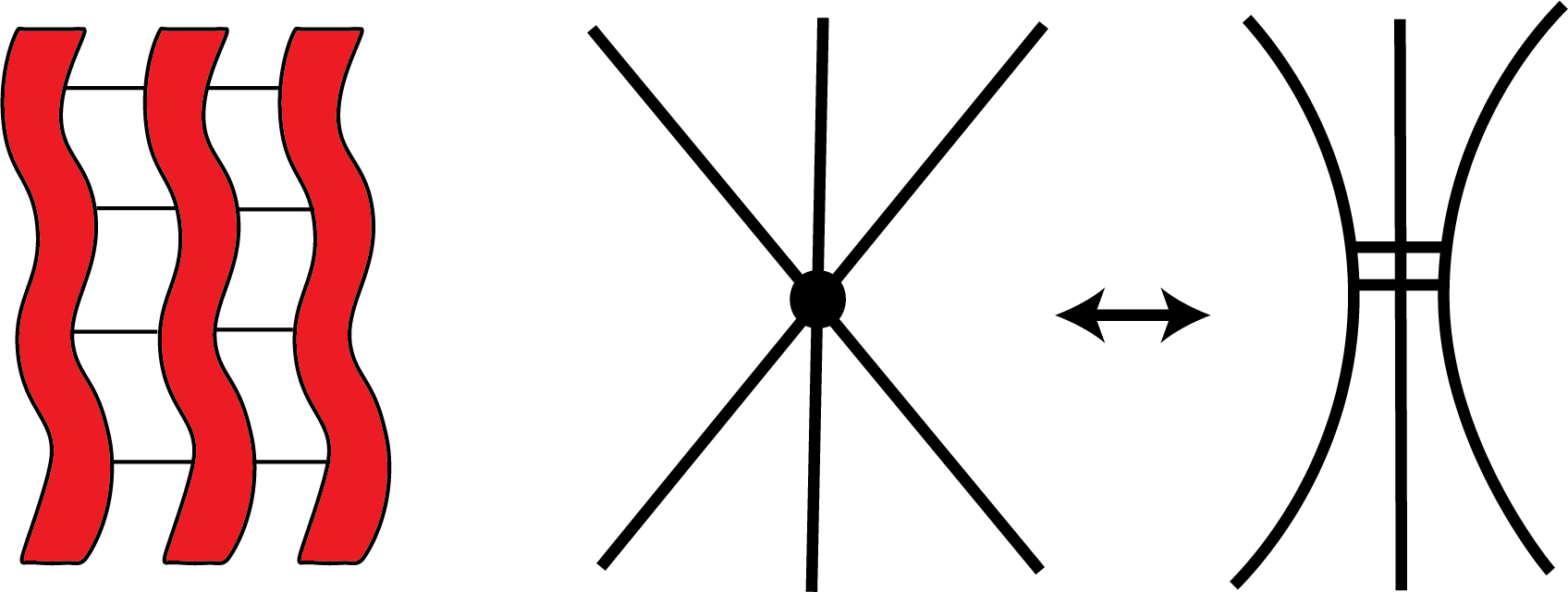}
  \caption{$\beta$-pleated sheets and multi-singular crossings.}
  \label{fig:23}
\end{figure}

We can then incorporate this into the moves shown in Figure \ref{fig:22} and define the set of singular Reidemeister moves shown in Figure \ref{fig:24} to describe topological isotopy. 

\begin{figure}[htbp]
  \includegraphics[width=.9\linewidth]{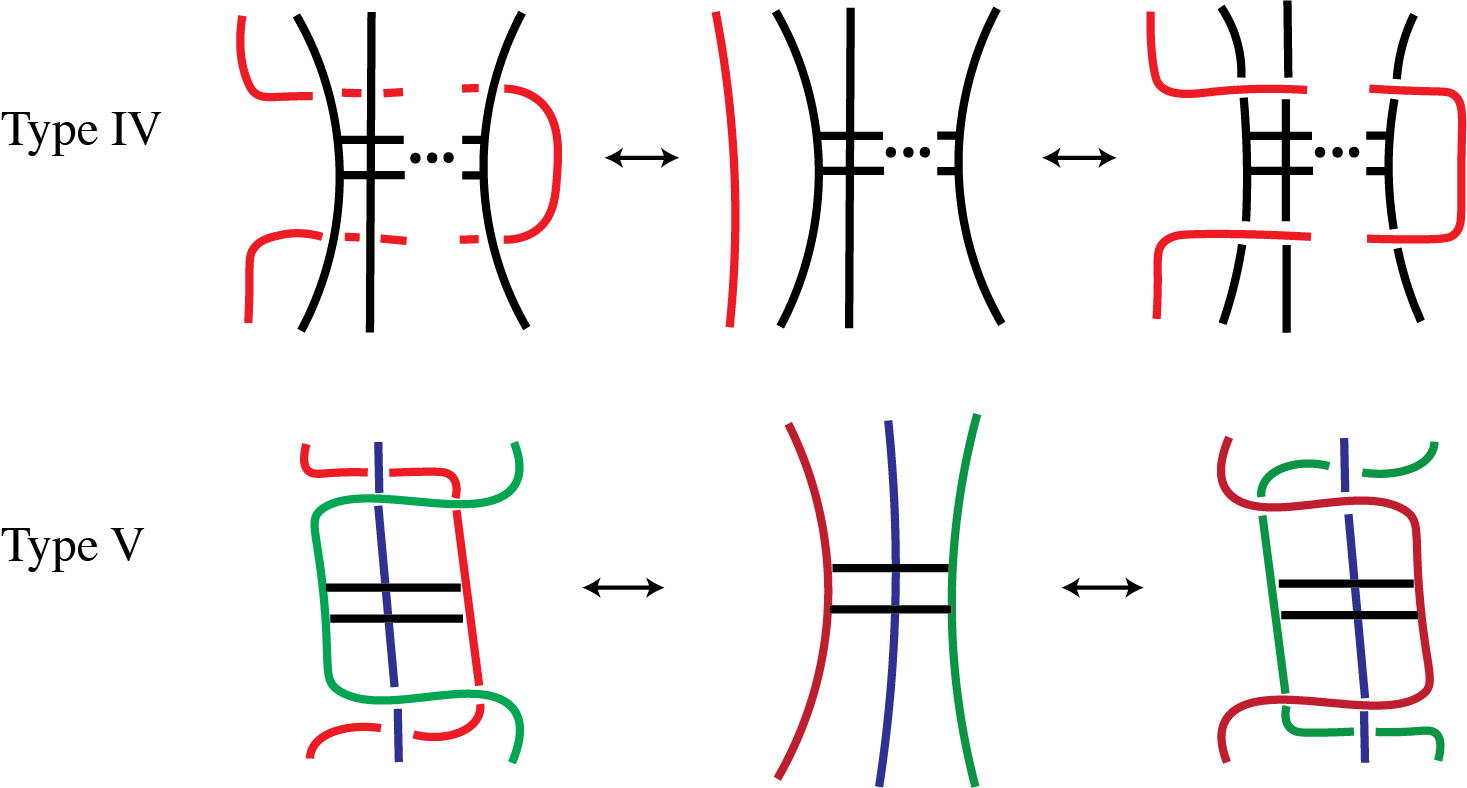}
  \caption{Types IV and V Reidemeister moves for multi-singular crossings.}
  \label{fig:24}
\end{figure}

Note that when applying a Type V Reidemeister move to a multi-singular crossing, we allow any number of segments to be included, even though we have only depicted three segments to simplify the illustration. When the singularity is flipped, this causes a half-twist of all of the segments above and below the multi-singular crossing in the process. 

Whereas in a Type IV move we only explicitly allow a segment to pass through a multi-singularity from left to right, we can use the given Reidemeister moves to show that we can slide a horizontal segment past a multi-singularity from top to bottom a well, as shown in Figure \ref{fig:25}.

\begin{figure}[htbp]
  \includegraphics[width=.95\linewidth]{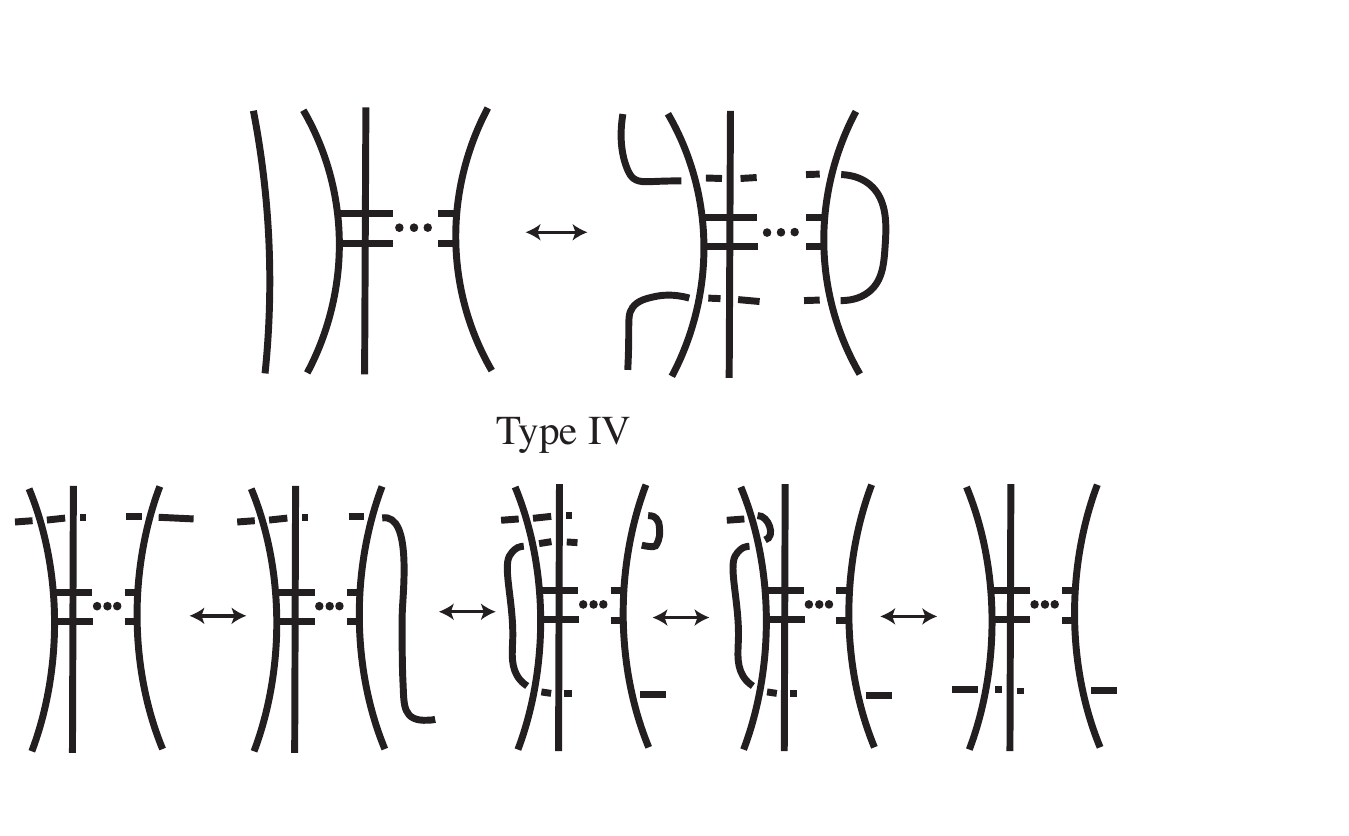}
  \caption{A vertical Type IV Reidemeister move produces a horizontal Type IV Reidemeister move.}
  \label{fig:25}
\end{figure}

Proteins also contain $\alpha$-helices, which we can represent in multiple ways. 
An $\alpha$-helix appears where a segment of a protein coils, with hydrogen bonds holding the coils together, as shown in Figure \ref{fig:11}. For now, we simply say that they do not add topological structure, but are important in protein function. In a projection, we allow $\alpha$-helices to slide along a segment, freely traversing a classical crossing, but not to pass through a singular crossing. Thus, we create a Reidemeister move Type VI to allow $\alpha$-helices, depicted as the small set of jagged lines, to pass over or under other segments as in Figure \ref{fig:26}.

\begin{figure}[htbp]
  \includegraphics[width=.95\linewidth]{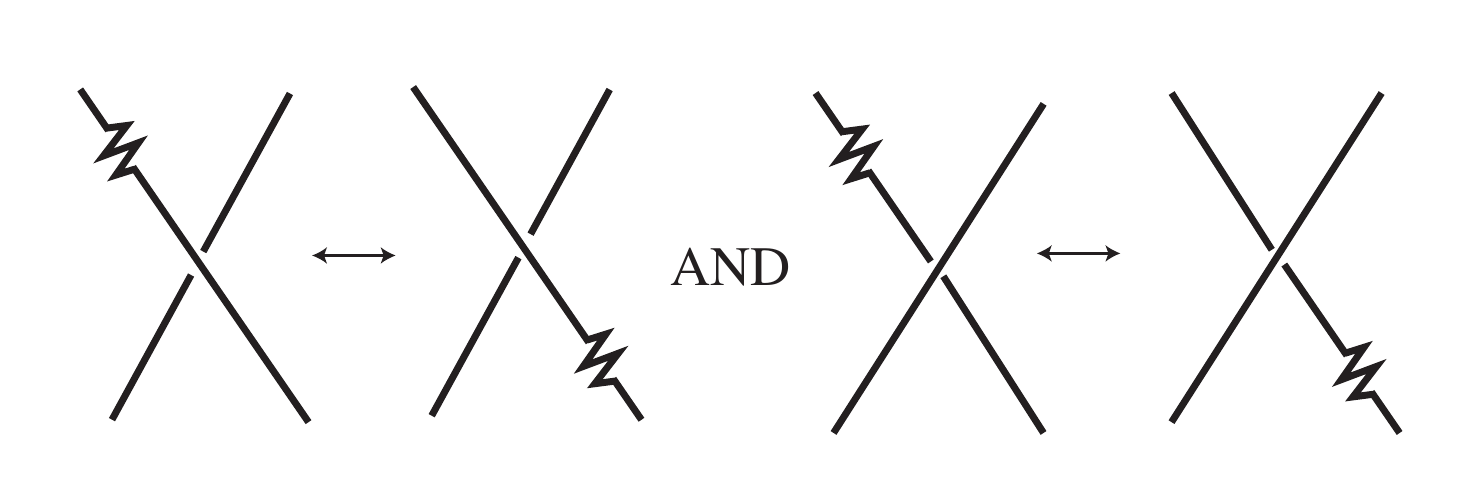}
  \caption{Type VI Reidemeister Move.}
  \label{fig:26}
\end{figure}

Since a protein is a continuous strand, we are able to equate its projection to a segment of a knot, allowing us the aforementioned Reidemeister moves. But because the protein has two unbonded endpoints, denoted $N$ and $C$,  that are free to move around, we need to be able to slide these endpoints past another strand.  Therefore, we define a final Reidemeister Type VII move for this action as well as in Figure \ref{fig:27}. 

\begin{figure}[htbp]
  \includegraphics[width=.95\linewidth]{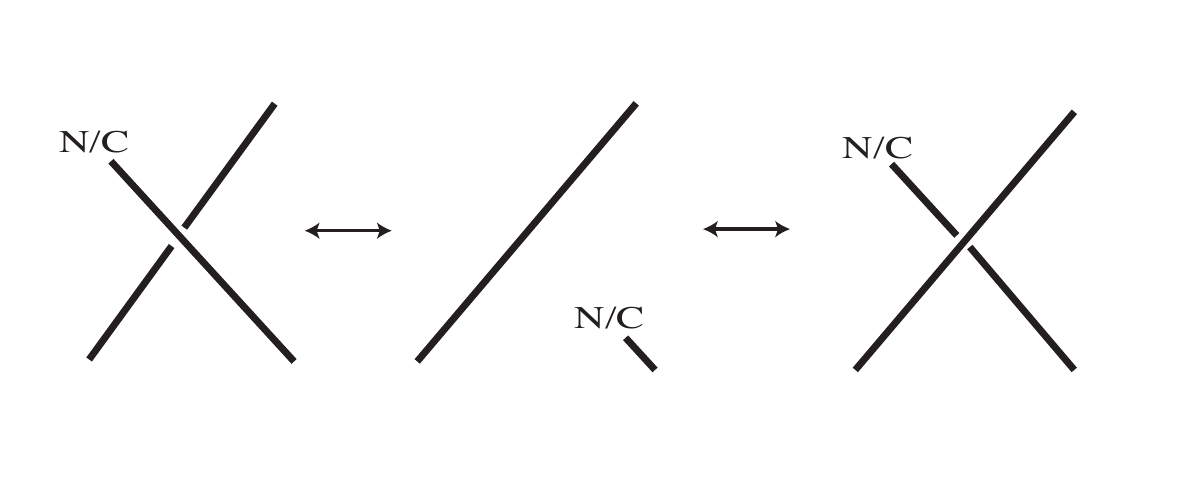}
  \caption{Type VII Reidemeister Move.}
  \label{fig:27}
\end{figure}


We would like to prove that this set of seven moves captures all possible equivalences between projections of protein conformations. To  achieve this, as is done in the knot case as well, we replace smooth conformations with piece-wise linear conformations, meaning that our conformation can be represented by a finite number of line segments glued end-to-end. 

\begin{theorem}Given two protein conformations, they are equivalent if and only if there is a sequence of planar isotopies and Reidemeister moves from this set of seven moves to get from a projection of the first to a projection of the second.
\end{theorem}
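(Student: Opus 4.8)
The plan is to adapt the classical piecewise-linear proof of Reidemeister's theorem, treating a protein conformation as an embedded \emph{decorated graph} in $\mathbb{R}^3$: the image of $[0,1]$ together with the finite combinatorial data recording which pairs or tuples of parameter values are identified at (multi-)singular crossings and which subarcs carry an $\alpha$-helix. Two conformations are equivalent exactly when an ambient isotopy of $\mathbb{R}^3$ carries one decorated graph to the other while respecting this data. As the excerpt already notes, we may replace smooth conformations by PL ones with no loss of generality, since every smooth conformation is isotopic to a PL approximation and the decoration is carried along.

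First I would invoke the PL ambient-isotopy theorem in its decorated-graph form: two such embeddings are ambient isotopic if and only if one is obtained from the other by a finite sequence of \emph{triangle moves}, where a triangle move replaces an edge $AB$ by two edges $AC, CB$ with the triangle $ABC$ disjoint from the rest of the configuration except along $AB$ (and conversely). Away from singular crossings and endpoints these are the usual triangle moves; at a singular or multi-singular crossing one also allows the rigid-vertex triangle moves that slide a strand past, or rotate, the bonded point; at a free endpoint one allows triangle moves on the single incident edge; and $\alpha$-helix subarcs are carried along passively. This is the spatial-graph analogue of Reidemeister's theorem (cf.\ Kauffman's moves for graphs in three-space), adapted to our convention of drawing a singular crossing flat, with incident strands bound rather than crossing.

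Next, fix a projection direction that is regular for both conformations and keep it fixed while performing the triangle moves above. Projecting and subdividing each triangle $ABC$ into finitely many subtriangles, I would arrange that each subtriangle is \emph{simple}: in projection its interior meets at most one strand, or at most one (multi-)singular crossing, or at most one $\alpha$-helix, or at most one strand near the endpoint vertex. A simple subtriangle move is then one of the following, matched to the listed moves: a planar isotopy (no interaction); a Type~I move (creating or removing a monogon); a Type~II or III move (sweeping across a strand or a bigon); a Type~IV move (pushing a strand across a singular crossing, the vertical--horizontal equivalence being supplied by Figure~\ref{fig:25}); a Type~V move (rotating a singular or multi-singular crossing, producing exactly the prescribed half-twists of the incident segments); a Type~VI move (sliding an $\alpha$-helix past a classical crossing); or a Type~VII move (a free endpoint sweeping across a strand). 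Finiteness follows from compactness, only finitely many triangle moves and subtriangles occurring. For the converse direction, each of the seven moves is visibly realized by an ambient isotopy of the conformation, supported in a small ball and preserving the decorated-graph structure; the only points worth noting are that Type~VII is legitimate precisely because our definition of protein equivalence allows endpoints to cross strands (this is where knotoid theory would diverge), and that a half-rotation of a (multi-)singular crossing together with the induced half-twist of all incident segments is genuinely an ambient isotopy.

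The main obstacle is the decorated-graph PL isotopy theorem at (multi-)singular crossings: proving that such isotopies are generated by ordinary triangle moves together with \emph{exactly} the rigid-vertex moves projecting to Types~IV and V (and their multi-strand versions), with no further local event required. Here one must check that a strand can always be pushed entirely off a ball containing a (multi-)singular crossing, that permuting the order of the segments passing through a multi-singularity is generated by the listed moves, and that the half-twists bookkept in Type~V compose consistently. Once this spatial-graph Reidemeister theorem is in hand, the endpoint and $\alpha$-helix cases, hence Types~VI and VII, are comparatively straightforward adaptations of the classical argument.
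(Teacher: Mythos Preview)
Your proposal is correct and follows essentially the same route as the paper: pass to PL conformations, realize ambient isotopy by triangle moves, and analyze how triangle moves project to the seven listed Reidemeister moves. The paper, however, organizes the argument more economically by citing \cite{Kau89} as a black box for the rigid-vertex 4-valent (RV4) spatial-graph case---this already delivers Types~I--V---and then observing that the three additional features of proteins require only minor adaptations: higher even valency does not affect Kauffman's argument, $\alpha$-helices are simply treated as \emph{valency-two vertices} (so the same triangle-move analysis immediately yields Type~VI), and free endpoints are handled by noting that a triangle overlapping an endpoint in projection produces Type~VII. Your version instead treats $\alpha$-helices as passive decorations and redevelops the subtriangle case analysis from scratch; this is more self-contained and makes the simple-subtriangle bookkeeping explicit, but the paper's device of regarding an $\alpha$-helix as a 2-valent vertex is a cleaner way to fold that feature into the existing spatial-graph framework. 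You also state the converse direction (each move is realized by an ambient isotopy) explicitly, which the paper leaves implicit. The ``main obstacle'' you flag---that rigid-vertex isotopies at (multi-)singular points are generated exactly by the moves projecting to Types~IV and V---is precisely what the paper outsources to \cite{Kau89}.
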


\begin{proof}As previously mentioned, it has already been proved  that for a classical knot, planar isotopy and  Type I, Type II, and Type III moves suffice to represent ambient isotopy in a knot (see \cite{Kau89} for a readable proof). The proof uses {\it triangle moves} to realize ambient isotopy between two piecewise linear knots. A triangle move is realized by taking a solid triangle that intersects the knot in one or two edges and  replacing those line segments on the knot by the non-intersecting edges on the triangle, as shown in Figure \ref{fig:28}.  Any isotopy we attain from deforming the original conformation can be represented by triangle moves. When we project down to a projection, one can show the triangle moves appear as planar isotopy and Reidemeister moves.  

\begin{figure}[htbp]
  \includegraphics[width=.95\linewidth]{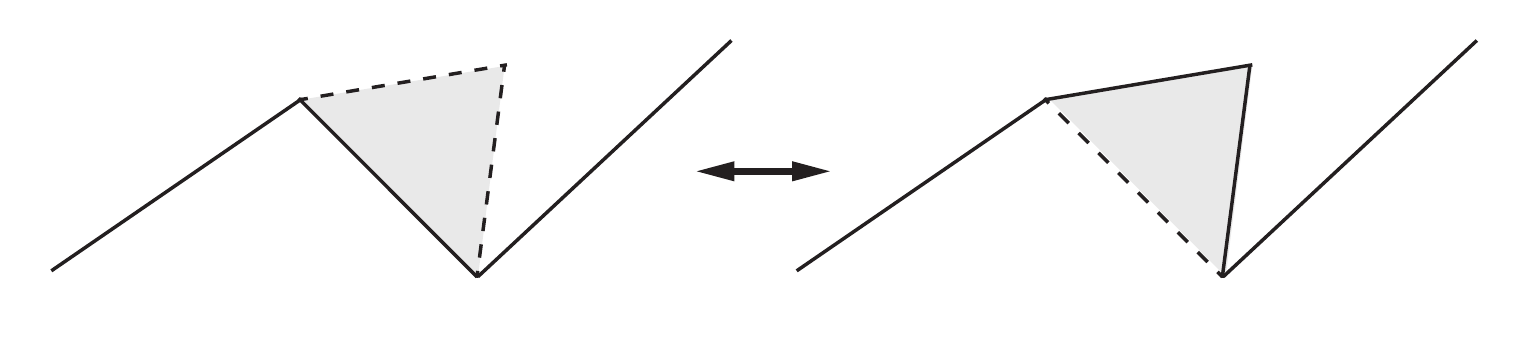}
  \caption{Example of Triangle Move}
  \label{fig:28}
\end{figure}

 A spatial graph is a conformation of a graph (consisting of a collection of edges sharing a collection of vertices as their endpoints) in 3-space. A rigid vertex graph further posits that adjacent edges coming into a vertex cannot twist about one another. The vertex can be flipped, intertwining the edges appropriately as in our Type V move. In \cite{Kau89}, it is  shown that two conformations of a rigid vertex spatial graph with all vertices of valency 4 (called an RV4 graph) are equivalent if and only if there is a sequence of planar isotopies and Reidemesister moves of Types I, II, III, IV, and V from a projection of one to a projection of the other. This proof also considers how triangle moves impact the projection. 
 
 Our situation for a protein conformation has three differences from this one. First, we allow our vertices to have an even number of edges that can be four or greater. However, this does not impact the proof as in \cite{Kau89} and it goes through exactly as before. 
 
 Second, we have $\alpha$-helices. These can be treated as vertices of valency two, and then the same arguments as in \cite{Kau89} go through to generate the Type VI Reidemeister move. 
 
 Third, we have the N and C endpoints of the protein. But it is straightforward to see that a triangle move that projects to overlap an endpoint will simply generate the Type VII Reidemeister move.

Thus, all isotopies allowed in deforming our structure can be represented by triangle moves, which in the projections can be represented by the defined Reidemeister moves. Hence, the seven moves are sufficient to convert one projection of a protein conformation to any projection of a topologically equivalent protein conformation. 

\end{proof}

With these seven Reidemeister moves, we are able to transform one projection of a protein to another.  However, it is inconvenient to convey these transformations through diagrams.  Encoding diagrams as strings of text allows us to both interpret and transmit the information. Therefore, we turn to Gauss codes to represent protein structure in text form.

\section{Gauss Codes}

Gauss codes are a means to represent knot projections using only symbols instead of diagrams. Using a Gauss code, we are able to easily go back and forth between the code and a projection while also being able to change entries in the code when Reidemeister moves are applied.

Since proteins are constructed and written from the N-terminus to the C-terminus, there is a definitive start and end point to proteins. Therefore, we start the Gauss code of a protein projection with $N$ and end the code with $C$. We also have a natural orientation to the protein. Crossings are oriented as in Figure \ref{fig:31}.

\begin{figure}[htbp]
  \includegraphics[width=.5\linewidth]{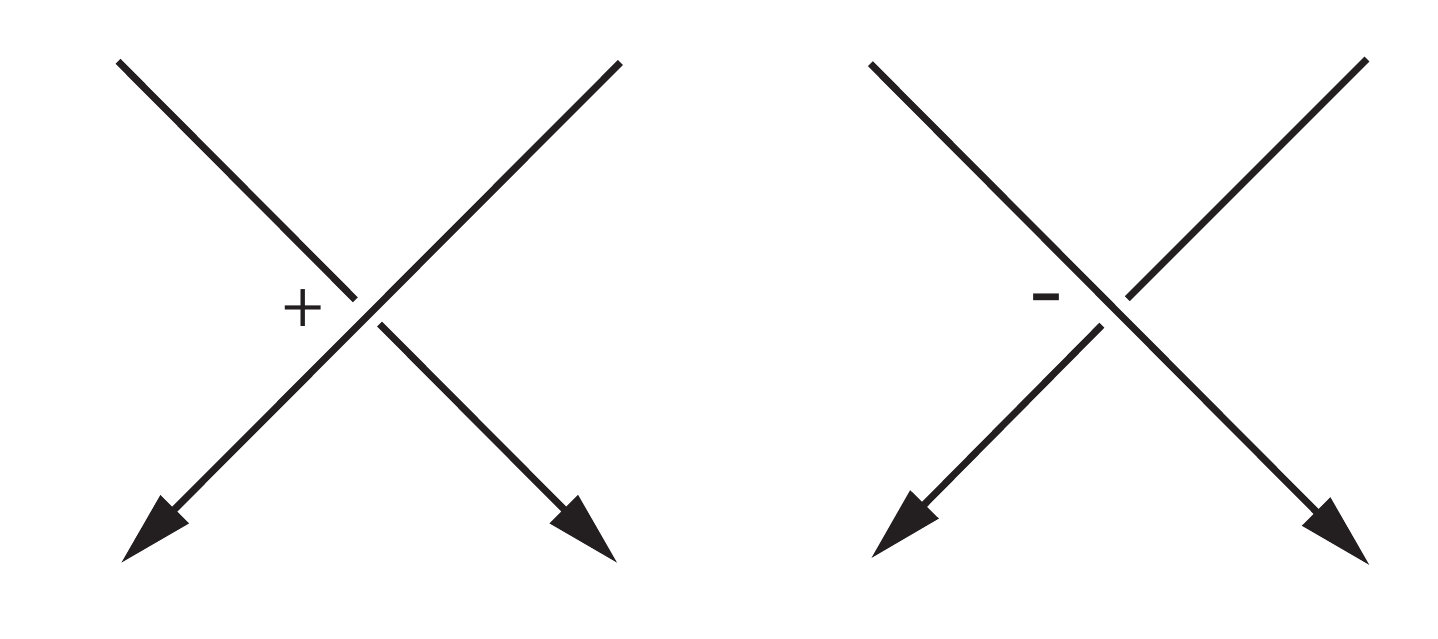}
  \caption{Positive and negative oriented crossings.}
  \label{fig:31}
\end{figure}

As with traditional Gauss codes, when following a strand, if the strand crosses over another strand, we denote this with an $O$ for over. Similarly, when traversing beneath another strand, we denote this with a $U$ for under. We assign an orientation to the crossing denoted by a superscript of $\pm$. 

If an $\alpha$-helix appears, we denote it as $\alpha$ with a superscript of $+$ if the helix is right-handed (coils clockwise) and $-$ if the helix is left-handed (coils counterclockwise). 
Bonds are written as $B$, and $\beta$-pleated sheets are written as $\beta$. Strands in bonds and $\beta$-pleated sheets are labeled with a superscript of $\pm$, using $+$ if the strand runs parallel to the strand that first occurs in the bond or sheet (so the first strand always receives a +), and $-$ if the strand runs anti-parallel to the strand that first occurs.

With $\beta$-pleated sheets, strands are numbered with a subscript. The zero strand is defined as the first strand that appears in the protein's sequence. Numbers are assigned as sequential integers to the left and right of the initial strand, with positive integers appearing on the side to which the second strand appears in the sequence, and all strands on the opposite side of the initial strand are defined as negative, as in Figure \ref{fig:32}. 

\begin{figure}[htbp]
  \includegraphics[width=.4\linewidth]{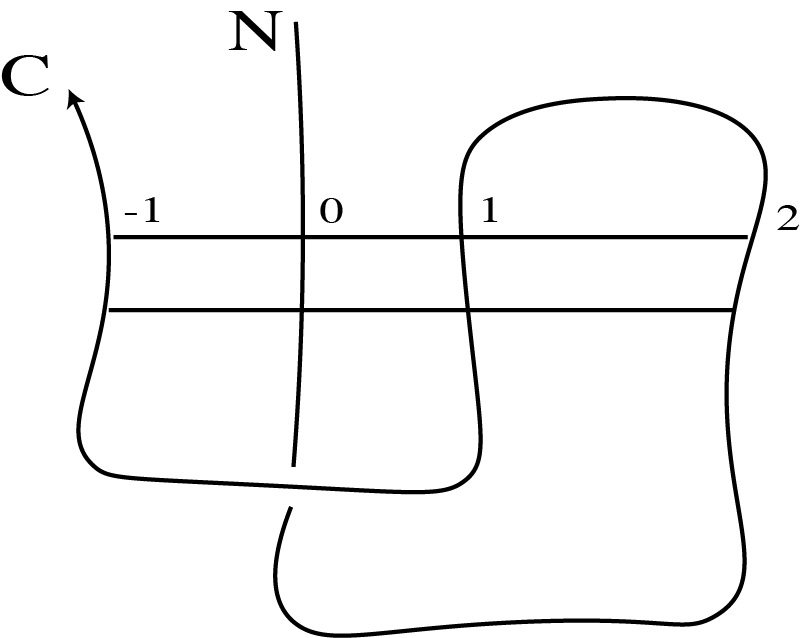}
  \caption{A $\beta$-pleated sheet in a projection, with Gauss code $N$ $\beta 1^+_0$ $\beta 1^-_2$ $\beta 1^+_1$ $\beta 1^-_{-1}$.}
  \label{fig:32}
\end{figure}

Finally, each crossing, $\alpha$-helix, bond, or $\beta$-pleated sheet is denoted with a sequential numbering based on its first appearance in the protein. Figure \ref{fig:33} gives an example of a Gauss code for a complete protein projection. 

\begin{figure}[htbp]
  \includegraphics[width=.5\linewidth]{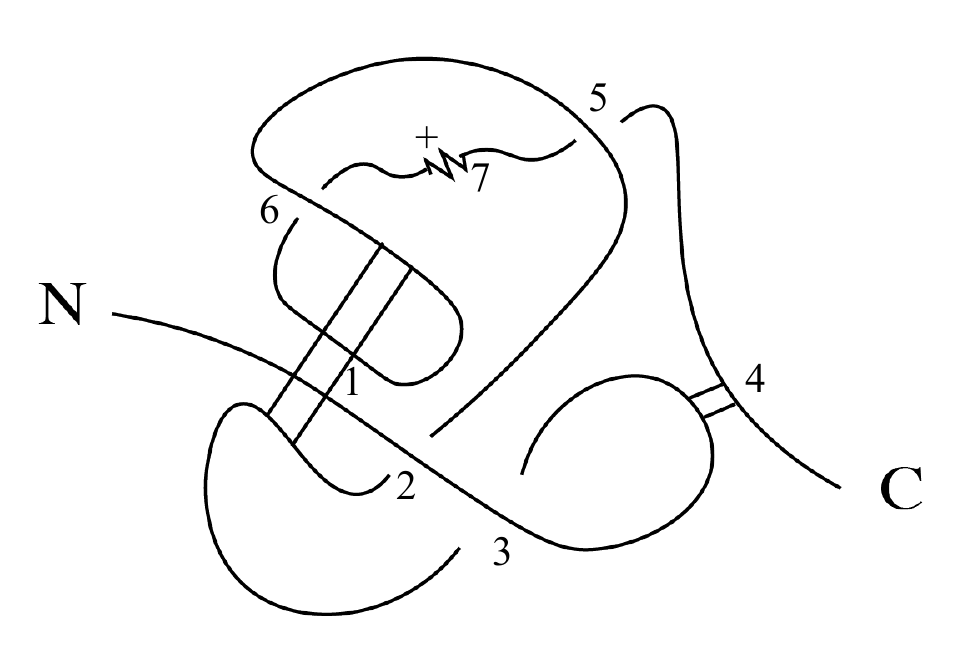}
  \caption{Example of a projection of a protein conformation with Gauss code given by  $N$ $\beta 1^+_0$ $O2^+$ $O3^-$ $B4^+$ $U3^-$ $\beta 1^+_1$ $U2^+$ $O5^-$ $O6^+$ $\beta 1^+_{-2}$ $\beta 1^-_{-1}$ $U6^+$ $\alpha 7^+$ $U5^-$ $B4^-$ $C$.}
  \label{fig:33}
\end{figure}

When we apply Reidemeister moves to a given projection, the move will be reflected in certain changes to the Gauss code. For instance, a Type I move inserts $On^\pm Un^\pm$ or $Un^\pm On^\pm$ into the Gauss code at the relevant point. Similar operations hold for all the Reidemeister moves. 

For example, for the protein projection appearing in Figure \ref{fig:33}, we could apply a Type VI Reidemeister move to slide the $\alpha$-helix out of the bigon (region in the projection plane bounded by two edges of the projection) bounded by crossings 5 and 6, and then remove the bigon by a Type II Reidemeister move to result in the Gauss code $N$ $\beta1^+_0$ $O2^+$ $O3^-$ $B4^+$ $U3^-$ $\beta 1^+_1$ $U2^+$ $\beta 1^+_{-2}$ $\beta 1^-_{-1}$ $\alpha 5^+$  $B4^-$ $C$.

But caution should be exercised. The corresponding operations on the Gauss codes do not always correspond to actual Reidemeister moves. For example, to do a Type II Reidemeister move, we must have two strands of the projection that are on the same complementary face of the projection. This is not visible from the Gauss code. 

\section{Quandles}

A \textit{knot invariant} is a map $I : \mathcal{K} \rightarrow S$ from all knot diagrams $\mathcal{K}$ to some set $S$ such that for any two projections $K_1$ and $K_2$ of the same knot type, $I(K_1) = I(K_2)$. The set $S$ can be a collection of integers, groups, polynomials or other mathematical objects. An invariant is said to be a \textit{complete invariant} if the converse is true, which is to say $I(K_1) = I(K_2)$ implies $K_1$ and $K_2$ represent the same knot type.


A quandle is an algebraic object that was introduced as an invariant for knots in 1982 independently in \cite{Joyce82} and \cite{Matveev84}. It has turned out to be a particularly effective means to distinguish knots.  For more details on quandles, see for example \cite{EN}. 


\begin{definition}A \textit{quandle} is a set $X$ with an operation $\rhd:X \times X \rightarrow X$ such that the following three conditions are satisfied. 
\begin{eqnarray*}
(1) && \text{For all } x \in X, x \rhd x = x. \\
(2) && \text{There exists an inverse function $\rhd^{-1}$ such that for all } x, y \in X,\\
&& (x \rhd y) \rhd ^{-1} y = x = (x \rhd ^{-1} y) \rhd y. \\
(3) && \text{For all } x, y, z \in X, (x \rhd y) \rhd z = (x \rhd z) \rhd (y \rhd z).
\end{eqnarray*}
\end{definition}


A slightly more restrictive algebraic structure than a quandle is a kei, also called an involutory quandle.

\begin{definition}A \textit{kei}, or  \textit{involutory quandle} is a set $X$ and operation $\rhd: X \times X \rightarrow X$ that satisfy the following three conditions. 

\begin{eqnarray*}
(1) && \text{For all } x \in X, x \rhd x = x. \\
(2) && \text{For all } x, y \in X, (x \rhd y) \rhd y = x = (x \rhd y) \rhd y. \\
(3) && \text{For all } x, y, z \in X, (x \rhd y) \rhd z = (x \rhd z) \rhd (y \rhd z).
\end{eqnarray*}
\end{definition}


Note that the only difference is that for an involutory quandle, $\rhd$ is equivalent to $\rhd^{-1}$. 


Depending on the situation, as we will discuss, one or the other of these algebraic structures may be the more appropriate to apply. 


A \textit{coloring} of an oriented knot projection by a quandle is an assignment of a value from $X$ to each arc, where an \textit{arc} is defined as part of a strand in a projection that both starts and ends at an under crossing, but going over zero or as many crossings as we like. We require that the labels assigned to the arcs be related through the quandle operation as in Figure \ref{crossingrelations}.

\begin{figure}[htbp]
  \includegraphics[width=.8\linewidth]{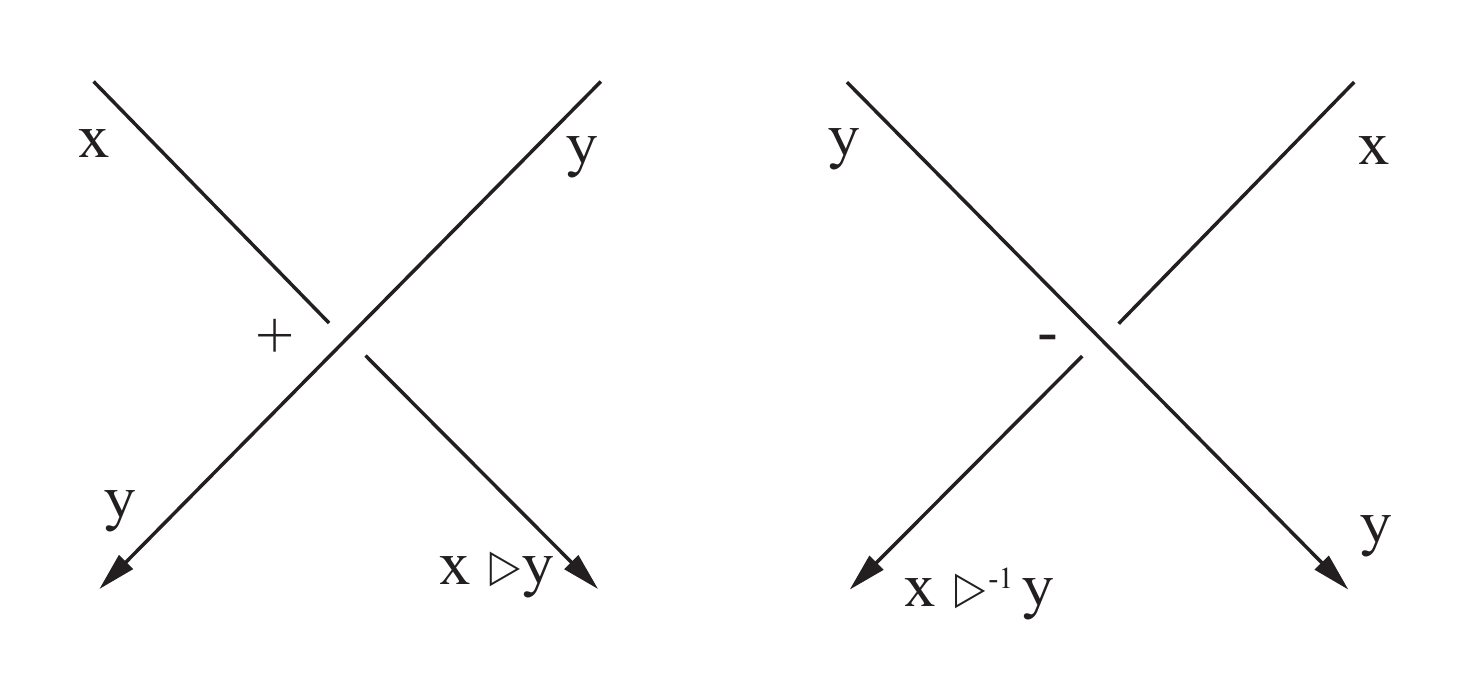}
  \caption{Quandle conditions that must be satisfied at a crossing.}
  \label{crossingrelations}
\end{figure}


The relevance of quandles to knots becomes apparent when we consider how the Reidemeister moves affect our labelled diagram as in Figure \ref{Reidemeisterquandle}. We see that the quandle axioms satisfied by the labels ensure that the quandle coloring is still valid after the Reidemeister moves. This means that the validity of the quandle coloring does not depend on the particular projection. It just depends on the knot type.  


\begin{figure}[htbp]
  \includegraphics[width=.9\linewidth]{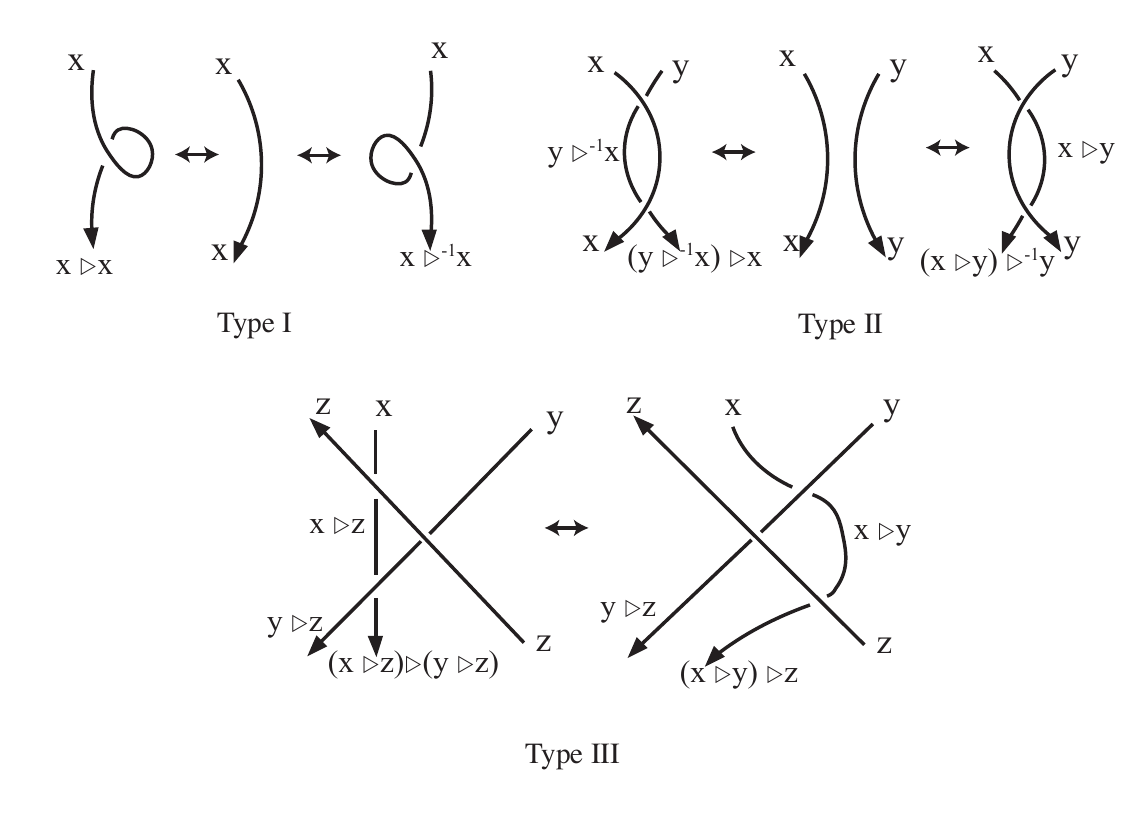}
  \caption{The quandle relations guarantee the Reidemeister moves respect the labels.}
  \label{Reidemeisterquandle}
\end{figure}

Thus, given a particular quandle, we can generate an invariant for knots by seeing how many distinct colorings by that quandle a particular knot has.  Two knots with different numbers of colorings by that quandle must then be distinct knots.

We can also drop the orientation on the knots, in which case $\rhd$ and $\rhd^{-1}$ become identical, the arrows disappear in Figure  \ref{Reidemeisterquandle},  and we color with involutory quandles instead. This simplifies things as we only have one operation to consider instead of two.

\section{Quandles and Singularities}

In order to allow for singularities in knots, the authors of \cite{CIEHN2016} introduced the {\it singquandle}. We first consider the singquandle for an unoriented knot, which will be an involutory quandle that satisfies additional conditions.

An arc in a singular knot projection is a strand that begins and ends at either an under-crossing or a singularity. Given an involutory quandle coloring of the arcs of a projection, we require the labels to satisfy conditions at the singular crossings as in Figure \ref{singularity1}, where $R_1(x,y)$ and $R_2(x,y)$ are maps from $X \times X$ to $X$ yet to be specified. 

\begin{figure}[htbp]
  \includegraphics[width=.3\linewidth]{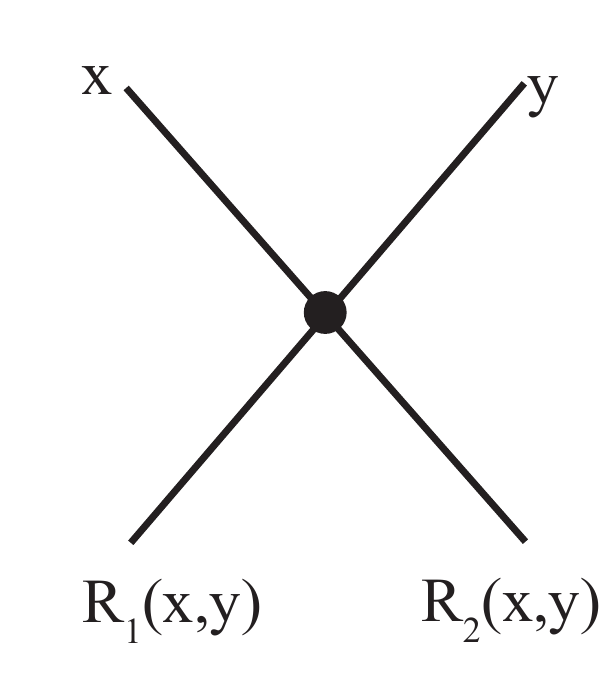}
  \caption{Labels at a singularity.}
  \label{singularity1}
\end{figure}

Since the diagram in Figure \ref{singularity1} can be  rotated by 90 degrees, 180 degrees and 270 degrees clockwise and the relation between the top pair of labels and the bottom pair of labels must be maintained, we immediately obtain certain relations that must be satisfied:

\begin{align}
x & = R_2(R_2(x,y), R_1(x,y)) & (\rm{rotate}\hspace{.05in} 180^o) \\
y & = R_1(R_2(x,y), R_1(x,y)) & (\rm{rotate}\hspace{.05in} 180^o)\\
x & = R_1(y, R_2(x,y)) & (\rm{rotate}\hspace{.05in} 270^o)\\
R_1(x,y) & = R_2(y , R_2(x,y)) & (\rm{rotate}\hspace{.05in} 270^o)\\
y & = R_2(R_1(x,y), x) & (\rm{rotate}\hspace{.1in} 90^o)\\
R_2(x,y) &  = R_1(R_1(x,y), x)) & (\rm{rotate}\hspace{.1in} 90^o)
\end{align}

In \cite{CIEHN2016}, the authors show that in the presence of singularities, the only additional Reidemeister moves necessary  are those coming from sliding a separate vertical strand on the left to the right behind or in front of a singularity,  or flipping a singularity as in Figure \ref{Reidemeistersing}.

\begin{figure}[htbp]
  \includegraphics[width=.5\linewidth]{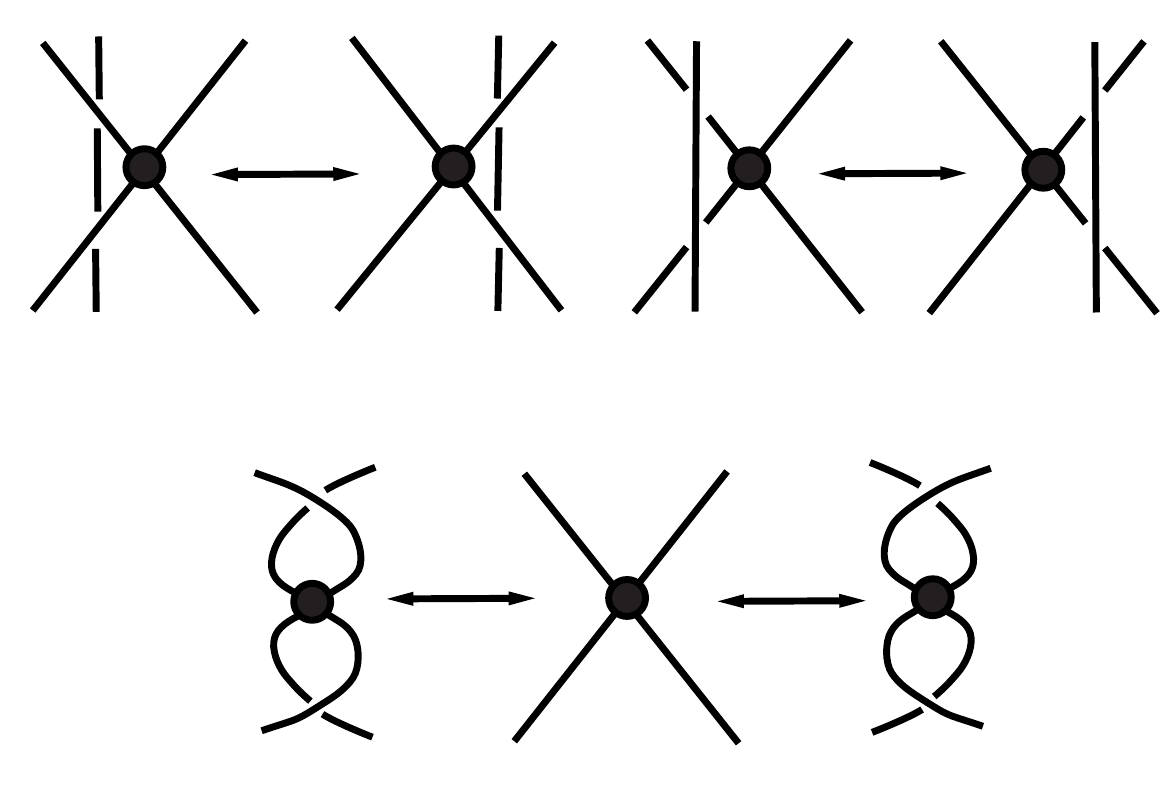}
  \caption{Reidemeister moves for a singularity.}
  \label{Reidemeistersing}
\end{figure}

These moves generate the additional relations:

\begin{align}
(y \rhd z) \rhd R_2(x,z) & = (y \rhd x) \rhd R_1(x,z) \\
R_1(x,y) & = R_2(y \rhd x, x) \\
R_2(x,y) & = R_1(y \rhd x, x) \rhd R_2(y \rhd x, x) \\
R_1(x \rhd y, z) \rhd y & = R_1(x, z \rhd y) \\
R_2(x \rhd y, z) &  = R_2(x, z \rhd y) \rhd y
\end{align}

\begin{definition} A {\it singquandle} is an involutory quandle, with a choice of $R_1(x,y)$ and $R_2(x,y)$ that satisfy all of the additional relations (1)-(11). 
\end{definition}

\bigskip

But the singularities we wish to consider for proteins are not of this type. In our case, we have bonds across two parallel strands, as in Figure \ref{singularity}.

\begin{figure}[htbp]
  \includegraphics[width=.3\linewidth]{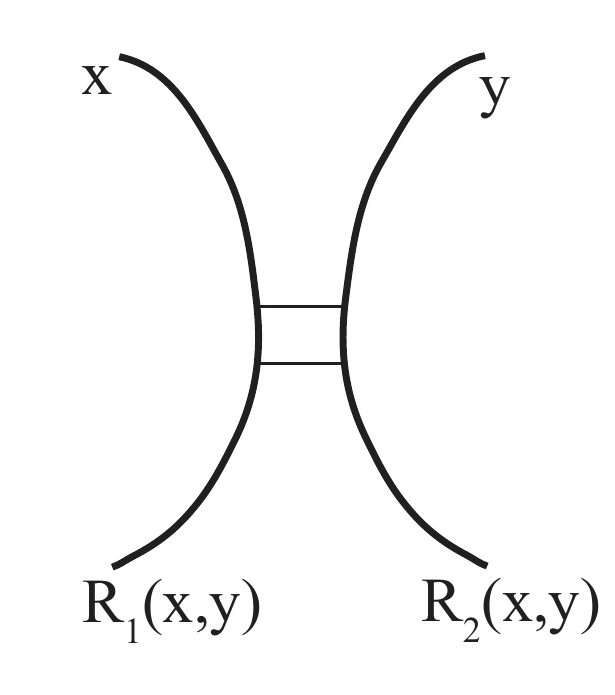}
  \caption{Labels at a bond.}
  \label{singularity}
\end{figure}

Such a bond does not have a four-fold rotational symmetry, but only a two-fold rotational symmetry. Thus, we have the following definition:

\begin{definition} An {\it involutory bondle} is an involutory quandle that satisfies the relations (1), (2), (7), (8), (9), (10) and (11).
\end{definition}

Although this choice allows us to incorporate bonds into our quandle, we do not yet have a way to represent $\beta$-pleated sheets. To deal with them, we replace a $\beta$-pleated sheet by a sequence of independent adjacent singular crossings as follows.

We have already assigned positive and negative integer values  to the strands in a $\beta$-pleated sheet from the subscripts of the Gauss code. Therefore, if we define the direction of the zero strand as downwards, we can define the relative heights of the individual singularities replacing a $\beta$-pleated sheet to be strictly increasing as the numbering increases, as shown in Figure \ref{fig:42}. The bonds appear as a set of stairs, either rising to the right or left, depending on which is the positive side of the labels on the $\beta$-pleated sheet. This transformation of a $\beta$-pleated sheet into adjacent singularities is called a {\it segmentation} of the $\beta$-pleated sheet.

\begin{figure}[htbp]
  \includegraphics[width=.6\linewidth]{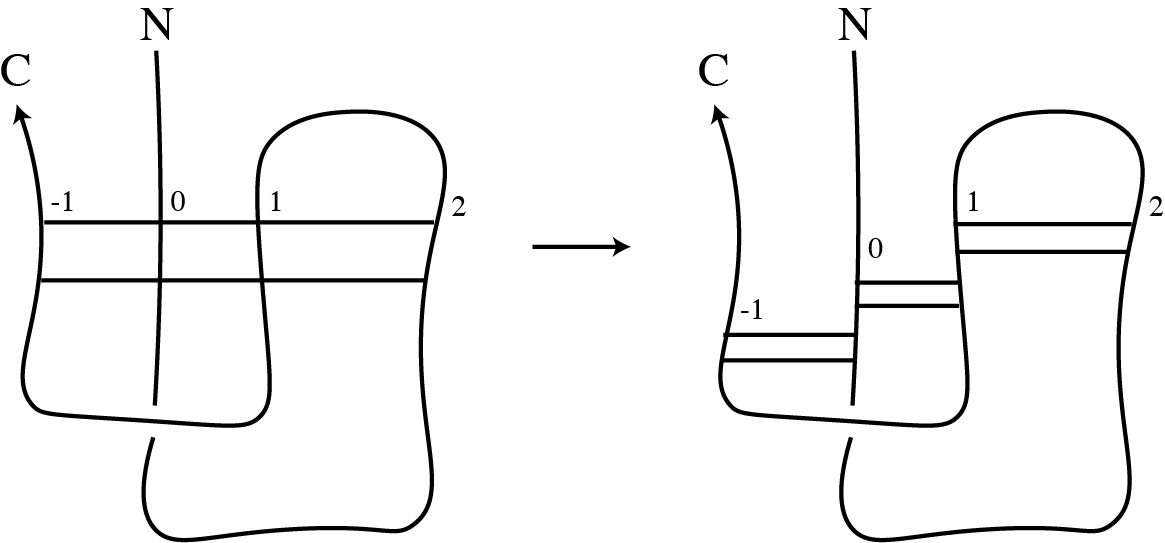}
  \caption{A $\beta$-pleated sheet before and after segmentation.}
  \label{fig:42}
\end{figure}

With this, we can still perform the Type IV and Type V moves on multi-singular crossing utilizing a sequence of Reidemeister moves on order two singularites, as shown in Figure \ref{fig:43}. Thus, no multi-singularity Reidemeister moves are needed. However, this choice for how to represent a $\beta$-pleated sheet does mean that we cannot distinguish between a protein with a $\beta$-pleated sheet and an identical one that has  the corresponding sequence of bonds in place of the $\beta$-pleated sheet. Bondle invariants will be equivalent for the diagrams in Figure \ref{fig:44}.

\begin{figure}[htbp]
  \includegraphics[width=.7\linewidth]{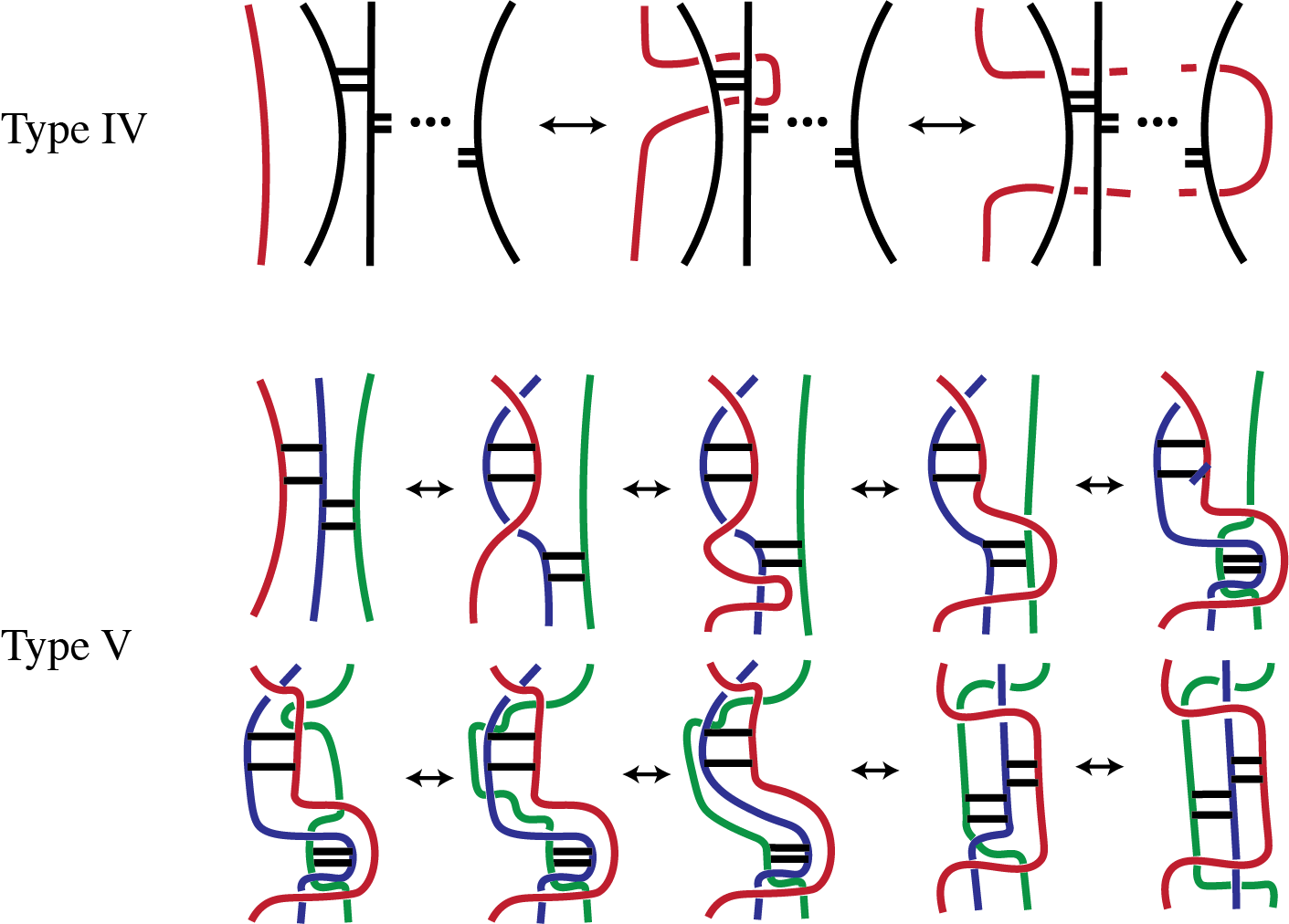}
  \caption{Reidemeister moves on a segmented $\beta$-pleated sheet.}
  \label{fig:43}
\end{figure}

\begin{figure}[htbp]
  \includegraphics[width=.8\linewidth]{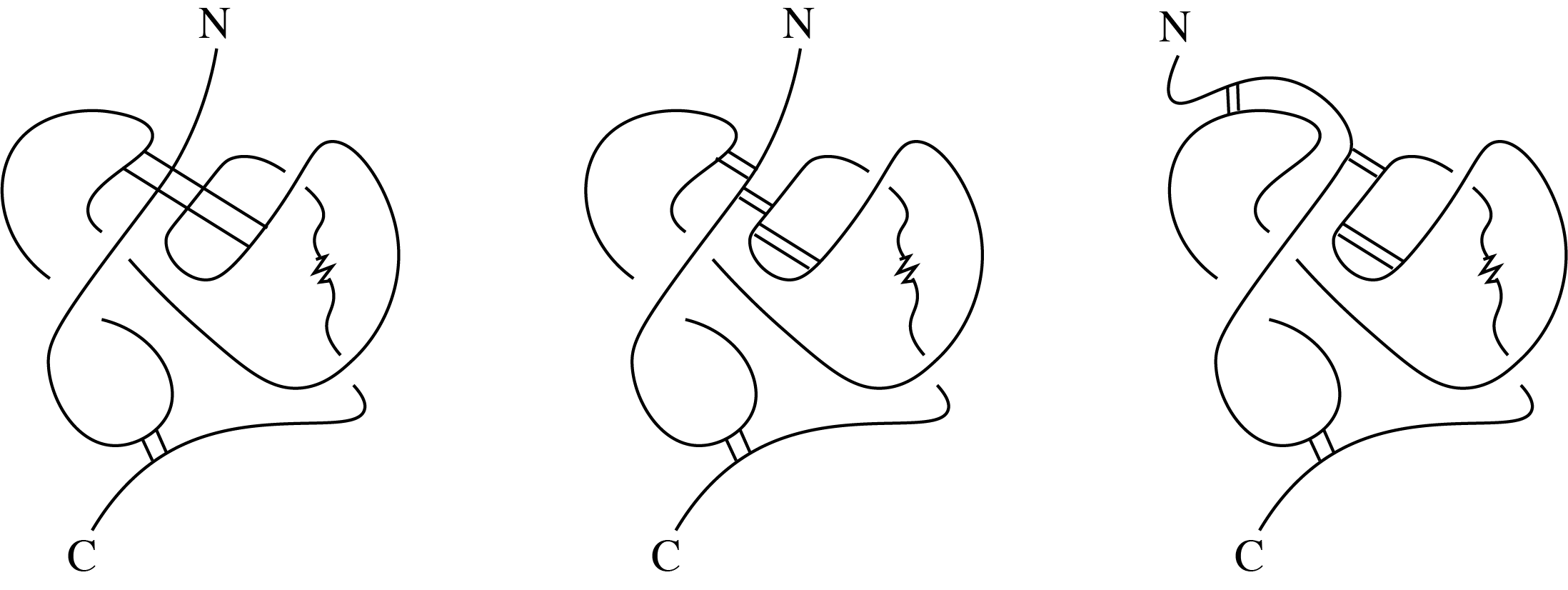}
  \caption{Three indistinguishable protein models.}
  \label{fig:44}
\end{figure}

The next issue we need to consider is the endpoints of the protein model. When considering proteins, we can view them as knot segments, with the ends free to move. Although in the physical realization of a protein,  ends are sometimes tucked inside the protein or are subject to constraints and are therefore not free to move, in our model, we allow them to slide past strands in any given projection. Even for a fixed rigid conformation, as we change our projection direction, the endpoints in the projections can slide past strands, eliminating or creating crossings. 
Therefore, we treat the end strands as insignificant until they reach the first bond. We think of the ends as only being relevant in defining the first and last bonds, and ignoring them otherwise, as in Figure \ref{fig:45}. 

\begin{figure}[htbp]
  \includegraphics[width=.5\linewidth]{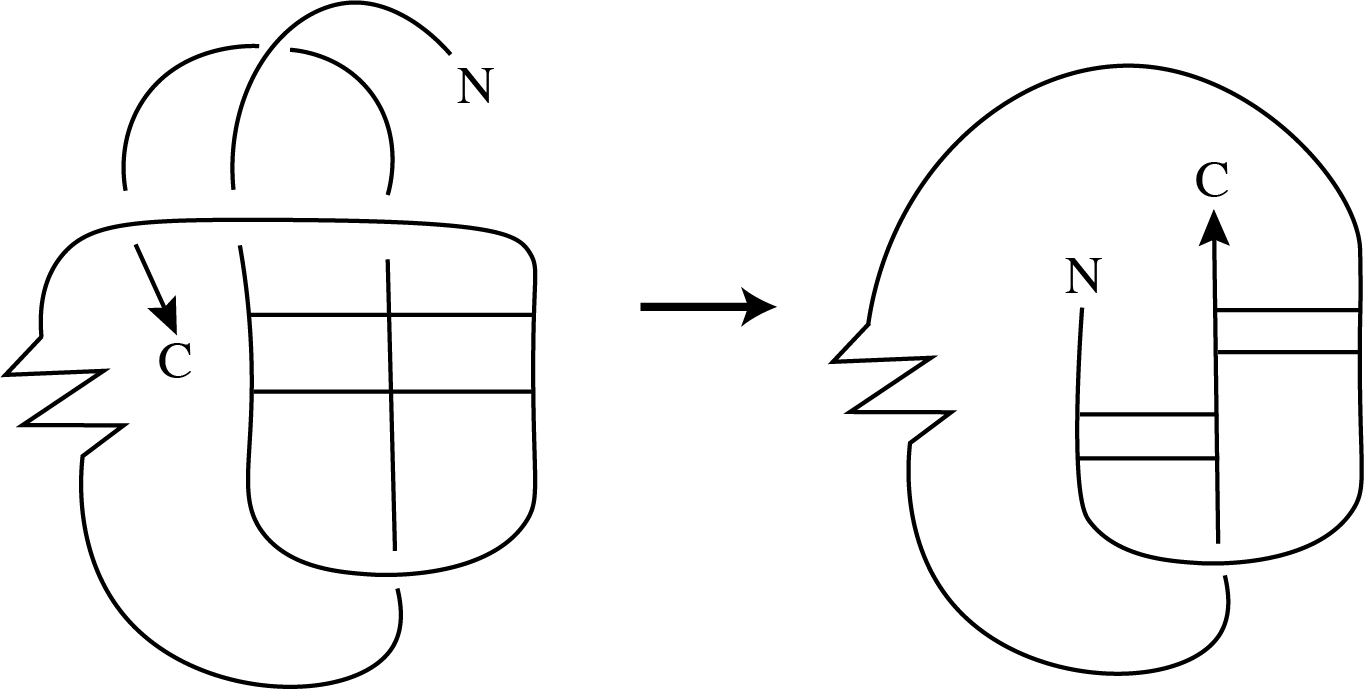}
  \caption{Reducing End Arcs}
  \label{fig:45}
\end{figure}

The final structure we need to consider is the $\alpha$-helix.  We view it as a sequence of $n$ kinks, where $n$ is the number of full rotations that the helix contains, all having either $+$ or $-$ crossings depending on whether it is a clockwise or counterclockwise $\alpha$-helix, as in Figure \ref{helix2}. These kinks are referred to as {\it residues}. Just as we are unable to distinguish a segmented $\beta$-pleated sheet from a sequence of adjacent bonds, we cannot distinguish an $\alpha$-helix from a sequence of kinks.

\begin{figure}[htbp]
  \includegraphics[width=.85\linewidth]{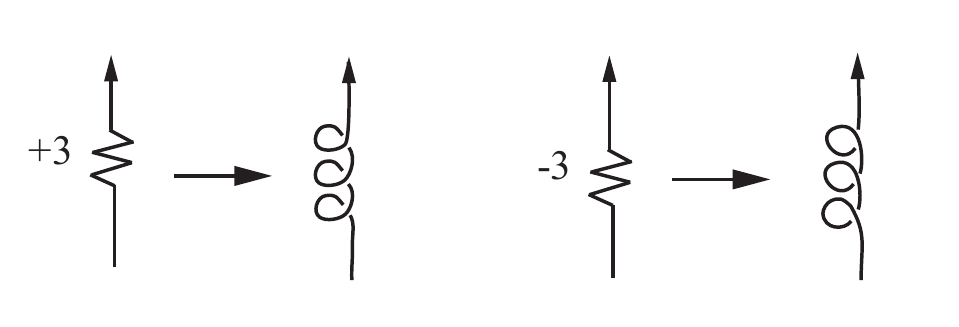}
  \caption{Replacing an $\alpha$-helix with a sequence of kinks.}
  \label{helix2}
\end{figure}

When coloring a protein with a quandle, the $\alpha$-helix becomes invisible because the Reidemeister Type I move in Figure \ref{Reidemeisterquandle} allows for the removal of kinks. However, there is a generalization of a quandle called a rack that does not allow for the removal of kinks, and therefore does see the existence of an $\alpha$-helix. A rack is simply a set that satisfies the second and third axioms of a quandle but not the first. We will not pursue racks further here.

\bigskip

\section{The Oriented Bondle} 
Since proteins do have a natural orientation, we should also consider the oriented version of the bondle.  The oriented singquandle was defined in \cite{BEHY2017}. The authors showed that in addition to the four traditional Reidemeister moves on oriented diagrams that were shown in  \cite{Polyak10} to suffice for oriented links (appearing as the first four in Figure \ref{orientedsingular}), the 14 possible Reidemeister moves involving singularities for oriented links can be reduced to the three depicted in Figure \ref{orientedsingular}. Thus seven Reidemeister moves suffice for equivalency of singular diagrams.

\begin{figure}[htbp]
  \includegraphics[width=.65\linewidth]{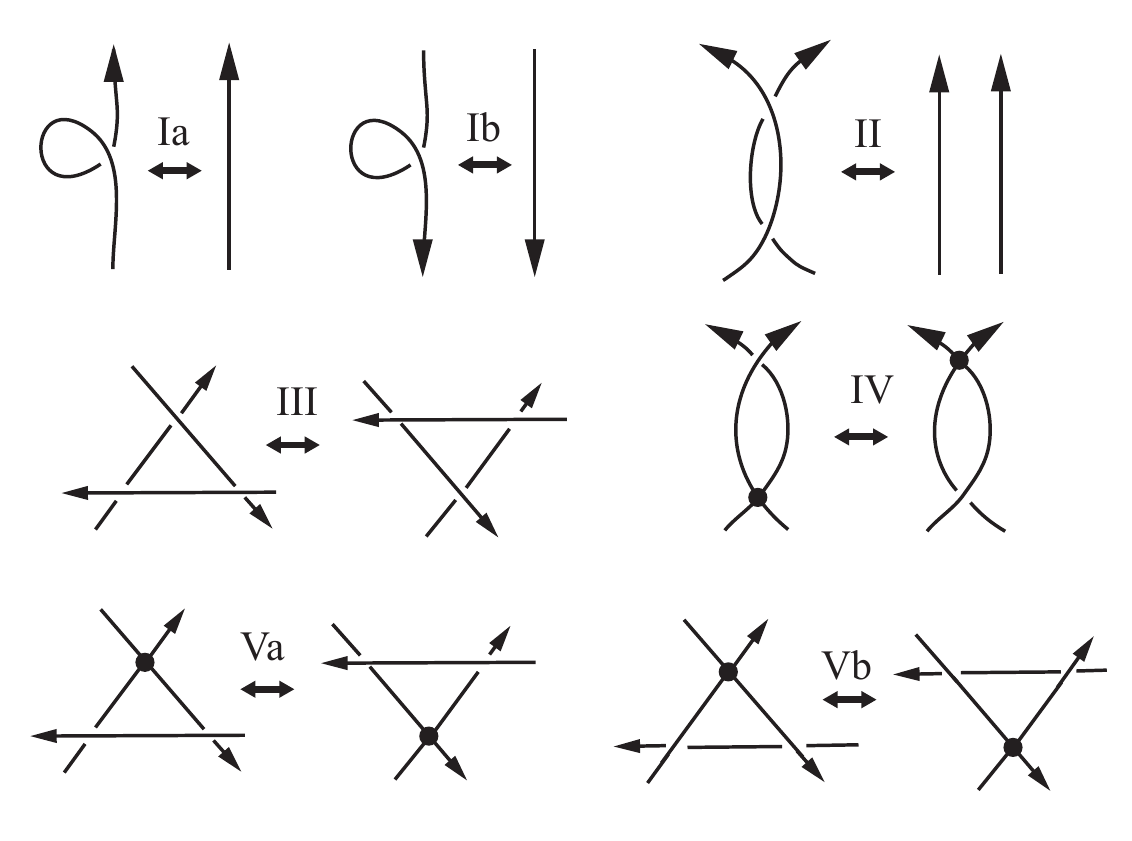}
  \caption{A generating set of Reidemeister moves for oriented singular knots.}
  \label{orientedsingular}
\end{figure}

Inserting our labels as in Figure \ref{singularity1} (but with both strands pointing downward) into these three possibilities, we obtain the a set of axioms to go with our three traditional quandle axioms coming from the non-singular moves.

\begin{definition}\label{ Def 1}

Let $(X, \tr)$ be a quandle.  Then if $R_1$ and $R_2$ are two maps from $X \times X$ to $X$ satisfying the following relations, we say that $(X, \tr)$ is an \it{oriented singquandle}.

	\begin{eqnarray}
	R_1(x {\tr}^{-1} y,z)\tr y&=&R_1(x,z\tr y)  \label{eq1}\\
	R_2(x {\tr}^{-1} y, z) & =&  R_2(x,z\tr y) {\tr}^{-1} y   \label{eq2}\\
	(y{\tr}^{-1} R_1(x,z))\tr x   &=& (y\tr R_2(x,z)) {\tr}^{-1} z  \label{eq3}\\
	R_2(x,y)&=&R_1(y,x\tr y)  \label{eq4}\\
	R_1(x,y)\tr R_2(x,y)&=&R_2(y,x\tr y)  \label{eq5}	
	\end{eqnarray}
\end{definition}


Note that for the oriented singquandle, there are no axioms coming from successive rotations by 90 degrees of Figure \ref{singularity1}.
The authors of \cite{BEHY2017} give the following two examples of singquandles.

\begin{example}\label{Example1}
		{\rm
	Let $n$ be a positive integer, and let $a$ be an invertible element in $\mathbb{Z}_n$ and $b$ any element in $\mathbb{Z}_n$.  Then the binary operations $x \tr y = ax+(1-a)y$, $x \tr^{-1} y = a^{-1}x+(1-a^{-1})y$, $R_1(x,y) = bx + (1-b)y$ and $R_2(x,y)= a(1-b)x + [b+ (1-b)(1 - a)]y $ make the triple $(\mathbb{Z}_n,\tr, R_1,R_2)$ satisfy the conditions to be an oriented singquandle.
}
\end{example}

	\begin{example}\label{Example2}
			{\rm
			Let $X=G$ be a non-abelian group with the binary operation $x \tr y=y^{-1}xy$.  Then, for $n \geq 1$, the following families of maps $R_1$ and $R_2$ make $(X, \tr, R_1, R_2)$ into an oriented singquandle:
			\begin{enumerate}
				\item
				$R_1(x,y)=x(xy^{-1})^{n}$
				and $R_2(x,y)=y(x^{-1}y)^n$,
				\item
				$R_1(x,y)=(xy^{-1})^nx$ and 
				$R_2(x,y)=(x^{-1}y)^ny,$

				\item
				$R_1(x,y)=x(yx^{-1})^{n+1}$ and 
				$R_2(x,y)=x(y^{-1}x)^n.$

			\end{enumerate}
}		
	\end{example}

\bigskip

In the case of proteins, we would like to consider bonds rather than singularities.  There are two distinct types of oriented bonds, one where the orientations on the two strands are parallel and one where they are anti-parallel, as in Figure \ref{singularity3}.

\begin{figure}[htbp]
  \includegraphics[width=.6\linewidth]{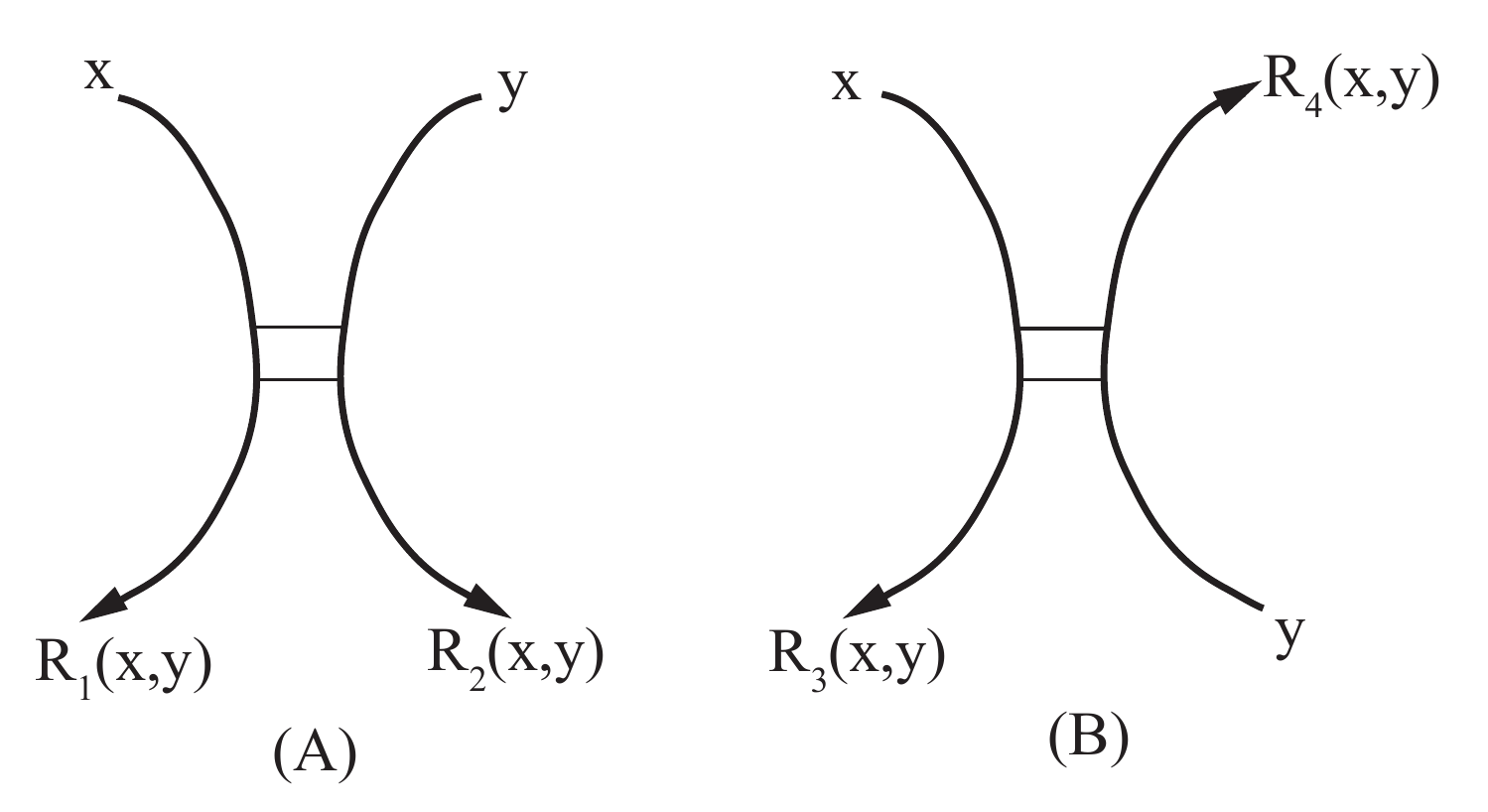}
  \caption{Labels at bonds with parallel and anti-parallel strands.}
  \label{singularity3}
\end{figure}

Each of the fourteen moves involving singularities from \cite{BEHY2017} yields two possibilities corresponding to whether the singularity is replaced with a vertical or horizontal bond. However, it is still true that for each of a vertical or horizontal bond, the fourteen moves reduce to three. So in addition to the four non-singular Reidemeister moves, we have six more moves to consider.

The first three correspond to bond diagram (A) in Figure \ref{singularity3}, and we inherit the same set of relations as for the singquandle, namely (12)-(16). 

Considering  bond diagram (B) from Figure \ref{singularity3}, we pick up two more functions $R_3(x,y)$ and $R_4(x,y)$. But note that rotation by 180 degrees switches the roles of $x$ and $y$ and the roles of $R_3$ and $R_4$. Thus, it is always the case that $R_4(x,y) = R_3(y,x)$. We will use this to eliminate $R_4(x,y)$ from all subsequent relations.

From Figure \ref{orientedrelations}, we obtain four additional relations.

\begin{figure}[htbp]
  \includegraphics[width=.75\linewidth]{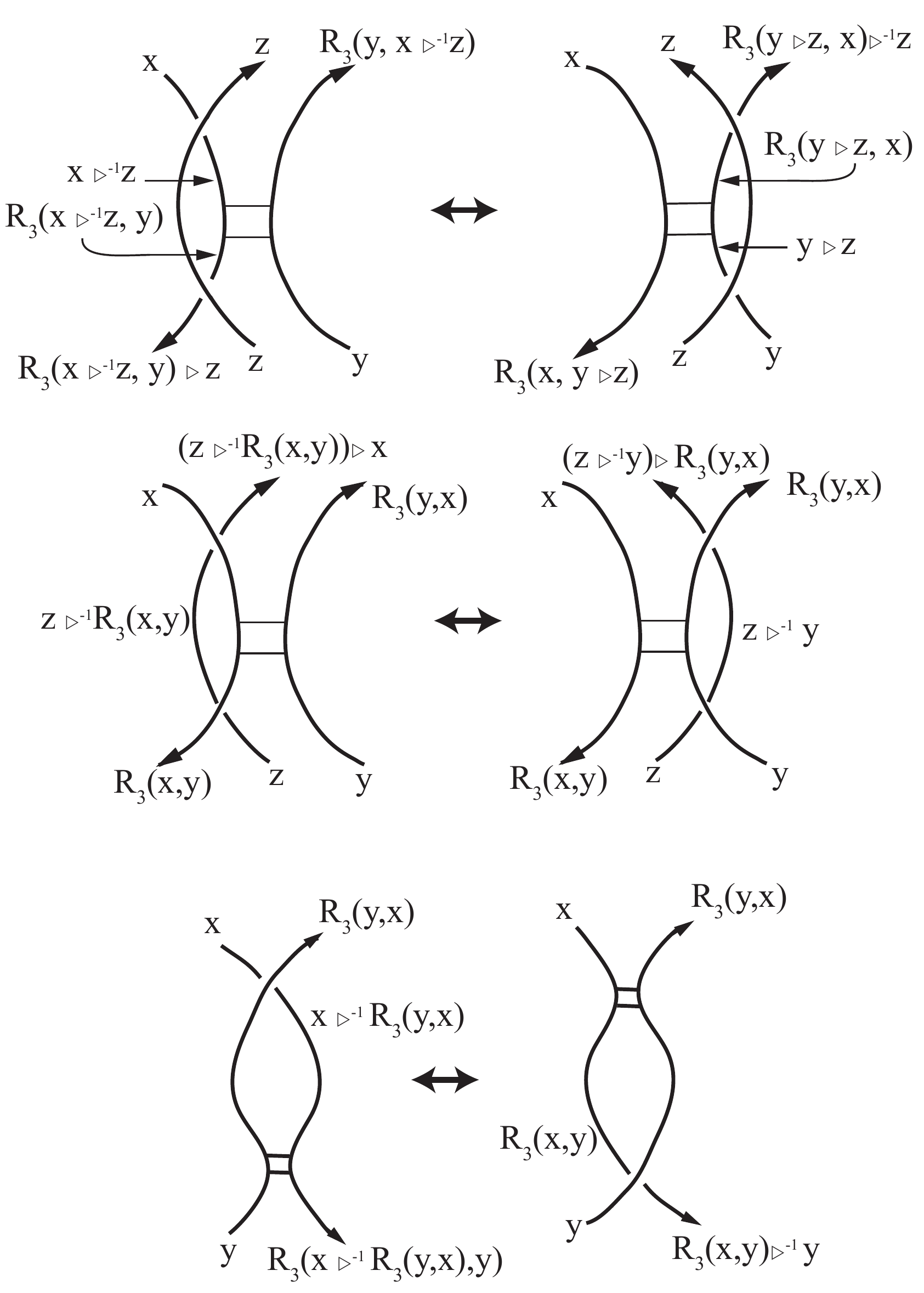}
  \caption{Relations from oriented bonds.}
  \label{orientedrelations}
\end{figure}

\begin{definition} \label{Def 2}An {\it oriented  bondle} is a quandle with operation $\rhd$ and choices for functions $R_1(x,y), R_2(x,y)$ and $R_3(x,y)$ such that they satisfy relations (12)-(16) and the additional relations: 
\begin{align}
R_3(y, x \rhd^{-1} z) & = R_3(y \rhd  z, x ) \rhd^{-1} z \label{eq6}\\
R_3(x, y \rhd z) & = R_3(x \rhd^{-1} z, y ) \rhd z  \label{eq7}\\
(z \rhd^{-1} R_3(x,y)) \rhd x & = (z \rhd^{-1} y) \rhd R_3(y,x)\label{eq8}\\
R_3(x,y) \rhd^{-1} y & = R_3( x \rhd^{-1} R_3(y,x), y).\label{eq9}
\end{align}
\end{definition}

	Note that the maps $R_3(x,y)=x$ and $R_3(x,y)=y$ do always satisfy the relations (\ref{eq6}),  (\ref{eq7}),  (\ref{eq8}) and  (\ref{eq9}) for any quandle $(X,\tr)$.  We call these {\it trivial solutions} as they do not recognize the existence of the bond.

Since we already have examples of the desired maps $R_1$ and $R_2$ for both Example~\ref{Example1} and Example~\ref{Example2}, we would like to find some solutions for the map $R_3$ satisfying relations (\ref{eq6}),  (\ref{eq7}),  (\ref{eq8}) and  (\ref{eq9}) . \\
	
	\begin{lemma}
		Let $n$ be a positive odd integer greater than or equal to 3 and let $a$ be an invertible element of $\mathbb{Z}_n$. Consider the quandle $(\mathbb{Z}_n, \tr)$ with $x \tr y= a x + (1-a)y$ and inverse operation $x \tr^{-1} y= a^{-1}x+ (1 - a^{-1})y$.  
		Let $m$ be an element in $\mathbb{Z}_n$ and let $R_3$ be given by $R_3(x,y) = mx+(1-m)y$. Then the map $R_3$ satisfies the equations (\ref{eq6}),  (\ref{eq7}),  (\ref{eq8}) and  (\ref{eq9}) if and only if $m(m-1)=0 \in \mathbb{Z}_n$.	
	\end{lemma}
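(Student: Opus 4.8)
The plan is a direct verification, exploiting that every operation in sight is affine over $\mathbb{Z}_n$. Each of $\tr$, $\tr^{-1}$ and $R_3$ has the form $(u,v)\mapsto\lambda u+(1-\lambda)v$ for a suitable $\lambda\in\mathbb{Z}_n$, so every composite appearing in (\ref{eq6})--(\ref{eq9}) is again an affine function of the free variables $x,y$ (and $z$, in the first three relations), with coefficients that are polynomial expressions in $a,a^{-1},m$ summing to $1$. Two such functions agree for all values of their variables exactly when their coefficient tuples coincide, so each universally quantified relation reduces to one or two identities in $\mathbb{Z}_n$; there is no genuine ``for all $x,y,z$'' content. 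The relations (12)--(16) play no role here, as they do not involve $R_3$.

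First I would dispose of (\ref{eq6}), (\ref{eq7}) and (\ref{eq8}). Expanding each side and collecting the coefficients of $x$, $y$ and $z$, I expect the parameter $a$ to cancel out completely and each identity to reduce to something tautological of the shape $(1-m)(1-a)=(1-a)(1-m)$. Thus these three relations hold for \emph{every} choice of $m$ and impose no constraint; this is the structural reason why the criterion in the statement involves only $m$. The only inputs are basic arithmetic in $\mathbb{Z}_n$ together with $aa^{-1}=1$.

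All the content is in (\ref{eq9}). Computing its right-hand side from the inside out --- form $R_3(y,x)$, then $x\tr^{-1}R_3(y,x)$, then apply $R_3(-,y)$ --- and comparing with the left-hand side $R_3(x,y)\tr^{-1}y$, the coefficient of $x$ works out to $a^{-1}m$ on the left and $m\bigl(a^{-1}+(1-a^{-1})(1-m)\bigr)$ on the right; the coefficients of $y$ yield the same equation by affineness. Equating the two and simplifying collapses everything to $(1-a^{-1})\,m(m-1)=0$, equivalently $(a-1)\,m(m-1)=0$ after multiplying through by the unit $a$. The ``if'' direction is then immediate: $m(m-1)=0$ forces this relation, and since (\ref{eq6})--(\ref{eq8}) hold automatically, $R_3$ is a legitimate choice --- with $m=0$ and $m=1$ recovering the two trivial solutions noted just before the lemma. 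For the ``only if'' direction one reads $(a-1)m(m-1)=0$ off of (\ref{eq9}) and cancels the factor $a-1$ --- the one point where invertibility of $a-1$ in $\mathbb{Z}_n$ enters --- to conclude $m(m-1)=0$.

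The main obstacle is not conceptual but purely a matter of careful bookkeeping in (\ref{eq9}): it is the one relation in which $R_3$ occurs nested inside itself, so the real danger is an algebra slip while reducing the cubic-looking coefficient expression down to the clean factored form $m(m-1)$ times a unit. Once (\ref{eq9}) has been reduced, both implications --- and the verification that (\ref{eq6})--(\ref{eq8}) are identities --- are routine arithmetic in $\mathbb{Z}_n$.
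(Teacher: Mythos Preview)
Your approach matches the paper's: verify by direct affine expansion that (\ref{eq6})--(\ref{eq8}) hold for every $m$, and then reduce (\ref{eq9}) to a single condition on $m$.  You are in fact slightly more careful than the paper on one point: the paper records the outcome of (\ref{eq9}) simply as $m(m-1)(x-y)=0$, whereas your computation correctly retains the extra factor and yields $(a-1)\,m(m-1)=0$.

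The place to be cautious is your final cancellation step.  You invoke invertibility of $a-1$ to strip that factor in the ``only if'' direction, but the lemma's hypotheses only assert that $a$ is a unit in $\mathbb{Z}_n$, not $a-1$.  When $a-1$ is a zero divisor the ``only if'' direction genuinely fails: for instance with $n=15$, $a=4$ (so $a^{-1}=4$ and $a-1=3$) and $m=5$, one has $(a-1)m(m-1)=3\cdot 20\equiv 0$ while $m(m-1)=20\equiv 5\not\equiv 0$, so $R_3(x,y)=5x-4y$ satisfies (\ref{eq6})--(\ref{eq9}) without $m(m-1)=0$.  Thus the lemma (and the paper's own proof, which silently drops the $a$-dependent factor) really needs the additional hypothesis that $a-1$ is invertible; with that hypothesis in place, your argument is complete and coincides with the paper's.
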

	
	\begin{proof}
		Direct computations show that the map $R_3$ given by $R_3(x,y) = mx+(1-m)y$ satisfies the three equations  (\ref{eq6}),  (\ref{eq7}),  (\ref{eq8}).  Now substituting $R_3$ in equation (\ref{eq9})  and simplifying gives the condition $m(m-1)(x-y)=0$, for all $x, y \in \mathbb{Z}_n$, and thus yields the condition $m(m-1)=0 \in \mathbb{Z}_n$.
		
	\end{proof}
	We then have the following corollary
	\begin{corollary}\label{Cor}
		Let $n=pq$ where $p$ and $q$ are odd primes.  Assume further that $x \tr y= a x + (1-a)y$ and $x \tr^{-1} y= a^{-1}x+ (1 - a^{-1})y$,  for invertible element $a$ in $\mathbb{Z}_n$. For any fixed element $b$ in $\mathbb{Z}_n$, let $R_1(x,y) = bx + (1-b)y$ ,  $R_2(x,y)= a (1-b)x + [b+ (1-a)(1-b)]y $ and $R_3(x,y) = mx+(1-m)y$.  Then $( \mathbb{Z}_n, \tr, R_1, R_2, R_3)$ is an oriented bondle if and only if $p$ divides $m$ and $q$ divides $(m-1)$ or  $p$ divides $(m-1)$ and $q$ divides $m$. 
	\end{corollary}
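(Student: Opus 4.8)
The plan is to exploit the fact that the relations defining an oriented bondle split into two groups sharing only the quandle operation: the singquandle relations (\ref{eq1})--(\ref{eq5}) involve just $\tr$, $R_1$, $R_2$, while the new relations (\ref{eq6})--(\ref{eq9}) involve just $\tr$ and $R_3$. Consequently $(\mathbb{Z}_n,\tr,R_1,R_2,R_3)$ is an oriented bondle if and only if $(\mathbb{Z}_n,\tr,R_1,R_2)$ satisfies (\ref{eq1})--(\ref{eq5}) \emph{and}, separately, $R_3$ satisfies (\ref{eq6})--(\ref{eq9}). I would check these two halves independently and then translate the resulting numerical constraint via the Chinese Remainder Theorem.

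For the first half, I would observe that the data $x\tr y=ax+(1-a)y$, $R_1(x,y)=bx+(1-b)y$, $R_2(x,y)=a(1-b)x+[b+(1-a)(1-b)]y$ is precisely the triple of Example~\ref{Example1} (the coefficient $b+(1-a)(1-b)$ is the same as the $b+(1-b)(1-a)$ written there, by commutativity in $\mathbb{Z}_n$). Since $a$ is assumed invertible in $\mathbb{Z}_n$, Example~\ref{Example1} guarantees that $(\mathbb{Z}_n,\tr,R_1,R_2)$ is an oriented singquandle, so (\ref{eq1})--(\ref{eq5}) hold automatically, for every choice of $m$.

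For the second half, note that $n=pq$ with $p,q$ odd primes is an odd integer $\ge 3$ and $a$ is invertible, so the hypotheses of the Lemma are met; it yields that $R_3(x,y)=mx+(1-m)y$ satisfies (\ref{eq6})--(\ref{eq9}) if and only if $m(m-1)=0$ in $\mathbb{Z}_n$. Combining the two halves, the tuple is an oriented bondle exactly when $m(m-1)\equiv 0 \pmod{pq}$. To finish, I would unpack this by the Chinese Remainder Theorem: with $p\ne q$ (as the notation $n=pq$ intends), $\mathbb{Z}_{pq}\cong\mathbb{Z}_p\times\mathbb{Z}_q$, so the congruence is equivalent to $p\mid m(m-1)$ and $q\mid m(m-1)$, i.e., since $p,q$ are prime, to ``$p\mid m$ or $p\mid(m-1)$'' together with ``$q\mid m$ or $q\mid(m-1)$''. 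Among the four combinations, $p\mid m$ with $q\mid m$ forces $m\equiv 0\pmod{pq}$ and $p\mid(m-1)$ with $q\mid(m-1)$ forces $m\equiv 1\pmod{pq}$, which are the trivial solutions $R_3(x,y)=y$ and $R_3(x,y)=x$ set aside after Definition~\ref{Def 2}; the surviving cases are $p\mid m$ with $q\mid(m-1)$ and $p\mid(m-1)$ with $q\mid m$, which is the stated condition.

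I do not anticipate a real obstacle: the substance lies in the Lemma and in Example~\ref{Example1}, and what remains is bookkeeping. The points deserving a moment of care are (i) confirming that the corollary's $R_2$ literally coincides with the one in Example~\ref{Example1}, so the example can be cited wholesale; (ii) noting that the Lemma's condition $m(m-1)=0$ involves neither $a$ nor $b$, which is exactly why the two halves genuinely decouple; and (iii) being explicit that the equivalence is asserted modulo the two trivial solutions $m\equiv 0,1\pmod{pq}$ and under the tacit assumption $p\ne q$.
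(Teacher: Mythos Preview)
Your proof is correct and is exactly the derivation the paper leaves implicit: the corollary is stated without proof immediately after the Lemma, and combining Example~\ref{Example1} for relations (\ref{eq1})--(\ref{eq5}) with the Lemma for (\ref{eq6})--(\ref{eq9}), then unpacking $m(m-1)\equiv 0\pmod{pq}$ via the Chinese Remainder Theorem, is the intended argument. Your observation that the ``if and only if'' tacitly excludes the trivial solutions $m\equiv 0,1\pmod{pq}$ is a correct reading of the statement and worth flagging.
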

	The following is a list of some $(n,m)$ satisfying Corollary~\ref{Cor}.
	\begin{enumerate}
		\item 
		If $n=15$ then $m=6$ or $m=10$.
		
		\item
			If $n=21$ then $m=7$ or $m=15$.
			
		\item
			If $n=33$ then $m=12$ or $m=22$.
		\item
			If $n=35$ then $m=15$ or $m=21$.\\

	\end{enumerate}





Now we consider the case when the quandle is a group $G$ with conjugation.  First recall that the commutator of two elements $x$ and $y$ in a group $G$ is given by $[x,y]:=xyx^{-1}y^{-1}$.  We have the following Lemma.

\begin{lemma}
	Let $X=G$ be a non-abelian group and let the quandle operation on $G$ be given by $x \tr y=y^{-1}xy$, so that $x\; {\tr}^{-1} y=yxy^{-1}.$  Assume that $R_3$ is given by $R_3(x,y) = x^py^q$, where $p$ and $q$ are integers, then
	
	\begin{enumerate}
		
		\item
		
		The map $R_3$ satisfies both equation (\ref{eq6}) and equation  (\ref{eq7}) for any integers $p$ and $q$.
		\item 
		If for all $x,y \in G$, $x^{p-1}y^q= x^{-q}y^{1-p}$ then $R_3$ satisfies equation (\ref{eq8}).
		
		\item
			Let $p$ be an integer.  If for all $x,y \in G$, the commutator $[x^p, y^{1-p}] =1,$ then $R_3$ satisfies equation (\ref{eq9}).
		
	\end{enumerate}
\end{lemma}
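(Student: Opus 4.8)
The plan is to prove all three parts by directly substituting $R_3(x,y)=x^py^q$ into the relations (\ref{eq6})--(\ref{eq9}) and simplifying using the two defining identities of the conjugation quandle, $x\tr y=y^{-1}xy$ and $x\tr^{-1}y=yxy^{-1}$, together with the one structural fact that makes everything collapse: conjugation commutes with powers, so $(gag^{-1})^n=ga^ng^{-1}$ for all integers $n$. Because of this, whenever an exponent $p$ or $q$ meets a conjugating factor produced by $\tr$ or $\tr^{-1}$ the factor slides past it, and both sides of each relation reduce to a short word in $x,y,z$. A convenient organizing observation is that each side of each of (\ref{eq6})--(\ref{eq9}) will turn out to be a conjugate of one of the input variables, so verifying a relation amounts to checking that the two conjugating elements agree.

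For part (1): expanding (\ref{eq6}), the left side is $R_3(y,\,zxz^{-1})=y^p(zxz^{-1})^q=y^pzx^qz^{-1}$, while the right side is $R_3(z^{-1}yz,x)\tr^{-1}z=(z^{-1}y^pz\,x^q)\tr^{-1}z=z\,(z^{-1}y^pz\,x^q)\,z^{-1}=y^pzx^qz^{-1}$, the same word, and no hypothesis on $p,q$ is used. The identical bookkeeping applied to (\ref{eq7}) makes both sides equal $x^pz^{-1}y^qz$, which proves part (1). For part (2): substituting into (\ref{eq8}), the left side becomes $x^{p-1}y^q\,z\,y^{-q}x^{1-p}$, which is $w z w^{-1}$ with $w=x^{p-1}y^q$ (note $w^{-1}=y^{-q}x^{1-p}$), and the right side becomes $x^{-q}y^{1-p}\,z\,y^{p-1}x^q$, which is $v z v^{-1}$ with $v=x^{-q}y^{1-p}$ (note $v^{-1}=y^{p-1}x^q$). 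Hence the hypothesis $x^{p-1}y^q=x^{-q}y^{1-p}$ for all $x,y$, i.e.\ $w=v$, forces the two sides to coincide, giving (\ref{eq8}).

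For part (3): in (\ref{eq9}) the left side is $R_3(x,y)\tr^{-1}y=(x^py^q)\tr^{-1}y=yx^py^{q-1}$. For the right side, first $x\tr^{-1}R_3(y,x)=(y^px^q)\,x\,(y^px^q)^{-1}=y^pxy^{-p}$, and then $R_3(y^pxy^{-p},y)=(y^pxy^{-p})^py^q=y^px^py^{q-p}$, once more using that the $p$-th power passes through conjugation by $y^p$. Equating $yx^py^{q-1}=y^px^py^{q-p}$ and multiplying on the left by $y^{-1}$ and on the right by $y^{p-q}$ reduces the requirement to $x^py^{p-1}=y^{p-1}x^p$ for all $x,y$; since an element commutes with $g$ exactly when it commutes with $g^{-1}$, this is precisely $[x^p,y^{1-p}]=1$. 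I do not anticipate any real obstacle: the computations are short, and the only point requiring care is the non-commutativity of $G$, so that exponents are never inadvertently amalgamated across a conjugating factor — the ``conjugate of an input variable'' observation above is exactly what keeps the algebra honest.
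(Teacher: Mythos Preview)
Your proof is correct and follows essentially the same approach as the paper: you perform the same direct substitutions into (\ref{eq6})--(\ref{eq9}), obtain the identical reduced words $y^pzx^qz^{-1}$, $x^pz^{-1}y^qz$, $x^{p-1}y^q\,z\,y^{-q}x^{1-p}$ vs.\ $x^{-q}y^{1-p}\,z\,y^{p-1}x^q$, and $yx^py^{q-1}$ vs.\ $y^px^py^{q-p}$, and then invoke the stated hypotheses exactly as the paper does. Your framing of parts (2) and (3) as matching conjugating elements $w=v$ and reducing to $x^py^{p-1}=y^{p-1}x^p$ is a clean way to package the same argument.
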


\begin{proof}
	Assume that $R_3$ has the form $R_3(x,y) = x^py^q$, then 
		\begin{enumerate}
			
			\item
			One can see that equation (\ref{eq6}) is satisfied for all integers $p$ and $q$ from the following. 
			\begin{eqnarray*}
		R_3(y, x\; {\tr}^{-1} z)&=&y^p(zxz^{-1})^q=y^pzx^qz^{-1}= zz^{-1}y^pzx^qz^{-1}\\
		&=&R_3(y \tr z,x)\;{\tr}^{-1} z.  
	\end{eqnarray*}
	Similarly, one has 
		\begin{eqnarray*}
	R_3(x, y \tr z)&=&x^pz^{-1} y^qz=z^{-1}zx^p z^{-1} y^qz= R_3(x\; {\tr}^{-1} z ,y) \tr z,
	\end{eqnarray*}
	showing that equation (\ref{eq7}) is satisfied also for all integers $p$ and $q$.\\
	
	\item
	Now we check equation (\ref{eq8}).  Assume that the equation $x^{p-1}y^q= x^{-q}y^{1-p}$ holds in $G$.  Now we compute both the left hand side (LHS)  and the right hand side (RHS) of equation (\ref{eq8}). 
	\begin{eqnarray*}
	LHS &=&(z {\tr}^{-1} R_3(x,y)) \tr x  = x^{-1}\; x^py^q\;z\; y^{-q}x^{-p}x \\
	&=& x^{p-1}y^q \; z \; y^{-q}x^{1-p}, \\
RHS &=& [R_3(y,x)]^{-1} yzy^{-1} R_3(y,x)=(y^px^q)^{-1}	yzy^{-1} y^px^q\\
&=& x^{-q}y^{1-p}\; z \; y^{p-1}x^q.
\end{eqnarray*}	
Since $x^{p-1}y^q= x^{-q}y^{1-p}$, then $LHS=RHS$ giving the result.
	
	\item
	We finish by checking equation (\ref{eq9}).  Here also we compute separately the LHS and the RHS.  We thus have 
	\begin{eqnarray*}
		LHS &=& R_3(x,y)\; {\tr}^{-1} y= yx^p y^q y^{-1}= yx^py^{q-1}\\		
		RHS &=& R_3(x\; {\tr}^{-1} R_3(y,x), y)=(y^px^qxx^{-q}y^{-p})^p y^q\\
		    &=& y^px^py^{-p}y^q= y^px^py^{q-p}.
\end{eqnarray*}

Now since the commutator $[x^p, y^{1-p}]=1$, then we have $x^p y^{1-p}=y^{1-p} x^p$.  Multiplying this equation by $y^p$ from the left and by $y^{q-1} $ from the right gives the equation $y^p x^p y^{q-p}=y x^p y^{q-1}$, thus
we have $RHS=LHS$ giving equation (\ref{eq9}).
\end{enumerate} 
\end{proof}

In order to give a more explicit example of a non-abelian group with a map $R_3$ satisfying equations (\ref{eq8}) and (\ref{eq9}), we use the group of symmetries of a square.

\begin{definition}
Given a square with vertices labeled by $1,2,3$ and $4$, let $G$ be the set of all rigid motions of the square that send vertices to vertices.  Under composition, this set forms a non-abelian group called the {\it dihedral group of order 8} and denoted $D_4$. Precisely, $D_4=\{1, r, r^2, r^3, s, sr, sr^2, sr^3\}$, where the permutation $r=(1\;2\;3\;4)$ is the clockwise rotation of $90$ degrees and $s$ is the reflection $s=(1\;2)(3\;4)$.
\end{definition}

Recall that in $D_4$, the elements $r$ and $s$ satisfy the relations $r^4=1=s^2$ and $srs=r^{-1}$.  By iterating this last identity, we obtain $sr^i s=r^{-i}$, for $0 \leq i \leq 3$. 
The square of any element of $D_4$ is either the identity element $1$ or $r^2$.  Then we see that $r^2$ commutes with any other element of $D_4$, since $sr^i\; r^2= (sr^{i+2}s)\;s= r^{-i-2}s=r^{4-i-2}s=r^{2-i}s=r^2(sr^is)s=r^2\; sr^i$.

\begin{corollary}
In the dihedral group $D_4$, the maps $R_3(x,y)=x^2y^{-1}$ and  $R_3(x,y)=x^{-1}y^2$ both satisfy equations (\ref{eq8}) and (\ref{eq9}). 
\end{corollary}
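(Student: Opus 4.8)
The plan is to recognize that both candidate maps are instances of the family $R_3(x,y) = x^p y^q$ treated in the preceding Lemma, and then to check that the sufficient conditions listed there hold for the relevant exponents. Concretely, $R_3(x,y) = x^2 y^{-1}$ corresponds to $(p,q) = (2,-1)$ and $R_3(x,y) = x^{-1} y^2$ corresponds to $(p,q) = (-1,2)$. By part (1) of that Lemma, equations (\ref{eq6}) and (\ref{eq7}) hold automatically for any integer exponents, so only (\ref{eq8}) and (\ref{eq9}) require attention, and it suffices to verify the two sufficient conditions given in parts (2) and (3) of the Lemma for each of these two choices of $(p,q)$.

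For (\ref{eq8}), the Lemma asks that $x^{p-1} y^q= x^{-q} y^{1-p}$ for all $x,y\in D_4$. The key observation is that in both of our cases $p+q=1$, whence $p-1=-q$ and $1-p=q$; the two sides of the required identity are therefore the same word in $x$ and $y$, so the condition holds trivially and (\ref{eq8}) follows for both maps.

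For (\ref{eq9}), the Lemma's sufficient condition is that the commutator $[x^p, y^{1-p}]$ be trivial for all $x,y$. For $R_3(x,y) = x^2 y^{-1}$ this reads $[x^2, y^{-1}]=1$, and for $R_3(x,y) = x^{-1} y^2$ it reads $[x^{-1}, y^2]=1$; in each case one of the two entries is a square of an element of $D_4$. Since every square in $D_4$ equals $1$ or $r^2$, and we have already noted that $r^2$ is central in $D_4$, such a commutator is always trivial. Hence (\ref{eq9}) holds for both maps, and combining with the verification of (\ref{eq8}) above completes the proof.

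There is essentially no obstacle here: once the problem is reduced to the preceding Lemma, the verification of (\ref{eq8}) is a tautology (given $p+q=1$) and the verification of (\ref{eq9}) reduces to the centrality of $r^2$, which has already been established. The only point demanding mild care is the bookkeeping of the signs of the exponents when matching $(p,q)$ to each map and when substituting into the conditions $x^{p-1}y^q = x^{-q}y^{1-p}$ and $[x^p, y^{1-p}] = 1$; one should also note in passing that since $D_4$ is non-abelian these $R_3$ are genuinely non-trivial solutions, not equal to the projections $x$ or $y$.
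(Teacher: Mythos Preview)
Your proof is correct and follows exactly the route implicit in the paper: apply the preceding Lemma with $(p,q)=(2,-1)$ and $(p,q)=(-1,2)$, and invoke the observation (stated just before the Corollary) that squares in $D_4$ lie in the center. Your remark that $p+q=1$ forces the condition for (\ref{eq8}) to be a tautology is a tidy way to package what the paper leaves to the reader.
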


We thus obtain the following family of bondles.

\begin{example}\label{ExampleBond}
			{\rm
		  Let $X=D_4$ be the quandle with operation $x \tr y=y^{-1}xy$, 
			then the following families of maps $R_1, R_2$ and $R_3$ make $(X, \tr, R_1, R_2, R_3)$ into a bondle:
			\begin{enumerate}
				\item
				$R_1(x,y)=x(xy^{-1})^{n}$,  $R_2(x,y)=y(x^{-1}y)^n$ and $R_3(x,y)=x^2 y^{-1}$ 
				\item
				$R_1(x,y)=(xy^{-1})^nx$, 
				$R_2(x,y)=(x^{-1}y)^ny$ and $R_3(x,y)=x^2 y^{-1}$

				\item
				$R_1(x,y)=x(yx^{-1})^{n+1}$, 
				$R_2(x,y)=x(y^{-1}x)^n$ and $R_3(x,y)=x^2 y^{-1}$. 
			\end{enumerate}
}		
	\end{example}
	Note that this example still holds if we change $R_3(x,y)=x^2 y^{-1}$  to $R_3(x,y)=x^{-1}y^2$. 

\bigskip

\section{Examples}
Given two projections of proteins and a choice of bondle, we can count the number of distinct colorings of each projection by that bondle, and if those numbers are distinct, we know the two proteins are not topologically equivalent. This provides an opportunity for the categorization of proteins into distinct topological types. 
In the following we give two examples demonstrating the use of \emph{oriented bondles} to topologically distinguish proteins with bonds.  
\begin{example}
\rm{
            In this example, we use the oriented bondle $( \mathbb{Z}_{15}, \tr, R_1, R_2, R_3)$ from Corollary~\ref{Cor} with $a = 8$.  Since $8 \times 2$ is congruent to $1$ modulo $15$ then $a^{-1} = 2$.  We set $b=2$ and then use this oriented bondle to distinguish the topological type of the following two two proteins $P_1$ and $P_2$.
            
\begin{figure}[htbp]

  \includegraphics[width=.75\linewidth]{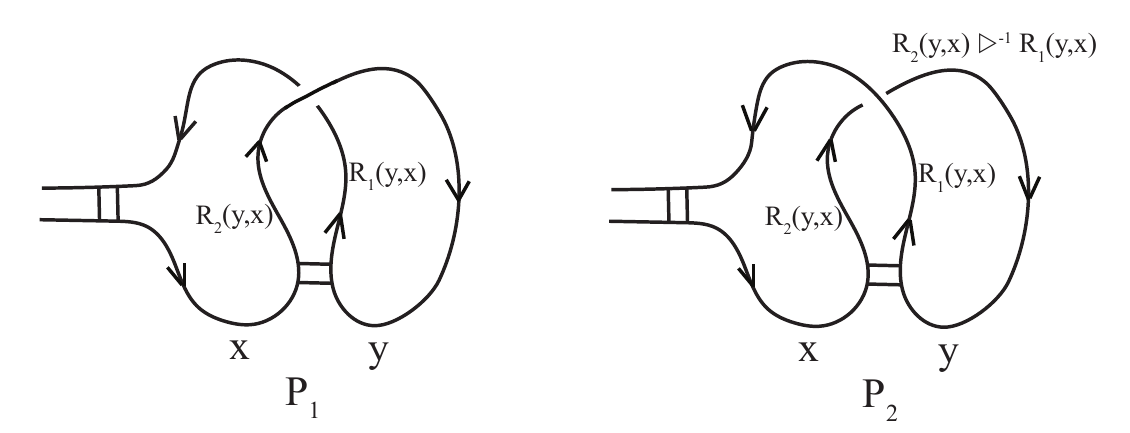}
  \caption{Distinguishing $P_1$ from $P_2$.}
  \label{HopfBond}
\end{figure}

            Precisely, $x \tr y= 8(x+y),$ $x \tr^{-1}y=2x-y$, $R_1(x,y)=2x-y$ and $R_2(x,y)=7x-6y$. Note that we do not need to define $R_3(x,y)$ because there are no anti-parallel bonds in the diagrams.
            
            A coloring of $P_1$ gives the following equation
            
              $$             R_2(y,x)=y $$

            which simplifies to
$$6(y-x)=0          $$
           So if 5 divides $y-x$, we obtain a nontrivial coloring. Thus, the total number of colorings, including the trivial colorings, is 45.
           
           On the other hand,  a coloring of $P_2$ gives the following equation
            $$R_2(y,x) \tr^{-1} R_1(y,x) = y$$
            which simplifies to
           $$11 (y-x) = 0.$$  
               
  Since $11$ is invertible in $\mathbb{Z}_{15}$, we see that $x=y$, implying that $P_2$ has \emph{only} trivial colorings, of which there are 15.  Thus $P_1$ and $P_2$ are distinct. 
  }         
            
\end{example}

\begin{example}
       \rm{ In this example, we include anti-parallel bonds. We utilize the bondle $\mathbb{Z}_{15}$ with $a = 7$, $a^{-1} =13= -2$ (modulo $15$), $b=8$ and $m=6$. Thus,  $x \tr y = 7x - 6y$ and $x \tr^{-1}y=-2x+3y$,  $R_1(x,y)=8x-7y$,  $R_2(x,y) = -4x + 5y$ and $R_3(x,y) = 6x-5y$.
           \begin{figure}[htbp]
           
  \includegraphics[width=1\linewidth]{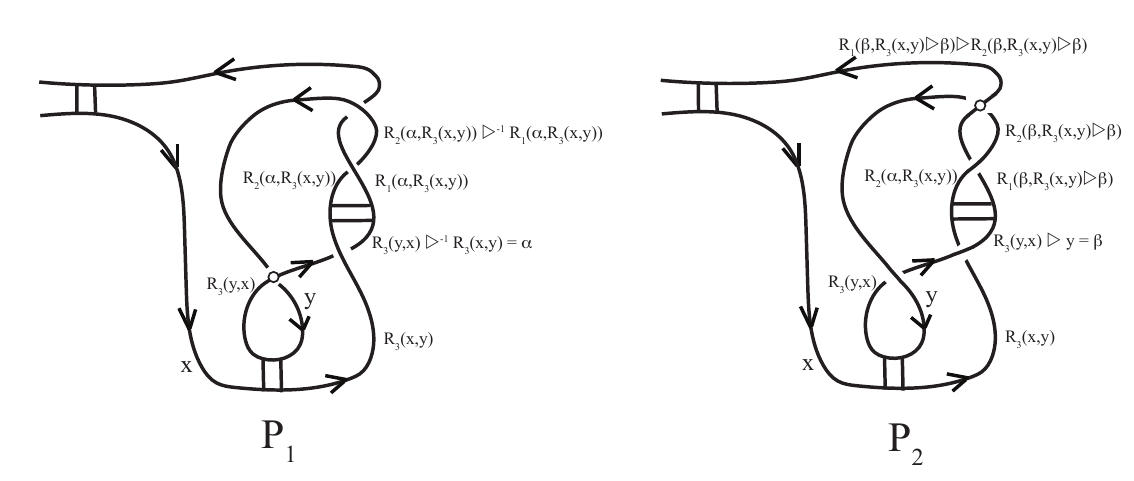}
  \caption{Distinguishing $P_1$ from $P_2$.}
  \label{distingusihprotein2}
\end{figure}

At the crossing with a hollow dot in $P_1$, we obtain the relation:

$$ y = [R_2(\alpha, R_3(x,y) \tr^{-1} R_1(\alpha, R_3(x,y)] \tr^{-1}R_3(y,x)$$

\medskip
  This yields $0 = 5(x-y)$, implying that there are nontrivial colorings corresponding to when 3 divides $x-y$. So we obtain a total of 75 colorings.
  
  But at the crossing with a hollow dot in $P_2$, we obtain the relation:
  
  \[ y = R_2(\beta, R_3(x,y) \tr \beta) \tr [R_1(\beta, R_3(x,y) \tr \beta) \tr R_2(\beta, R_3(x,y) \tr \beta)]]
           \]
          
This yields $0 = 7(y-x)$, and as 7 is invertible, we only obtain the  15 trivial colorings corresponding to  $y = x$. Thus, the two proteins must be topologically distinct.
 }
\end{example}

\section{Conclusion}

When intra-chain interactions are included for linear molecules, a rich knot theory is possible. Utilizing some of the standard tools of knot theory extended to this new paradigm, including generalized Reidemeister moves and Gauss codes, it is possible to catalog the various knotted structures that result. To that end, the extension of quandles to bonded linear segments, called bondles, allows for the differentiation of the topological structures that can appear. This approach could be mechanized, allowing for computers to search for the parameters for the appropriate bondle to distinguish between the topological types of two proteins, for instance. There are many avenues for further research in these directions. 

\section*{Acknowledgement} Thanks to Jack Roche for suggesting the term ``bondle''.

\medskip

\bibliographystyle{amsalpha}
\bibliography{Thesis.bib}

\end{document}